\documentclass[11pt]{amsart}
\usepackage{graphicx}
\usepackage{amsmath}
\usepackage{color}

\textwidth 16 cm
\setlength{\oddsidemargin}{0mm}
\setlength{\evensidemargin}{0mm}

\newtheorem{thm}{Theorem}[section]

\newtheorem{cor}[thm]{Corollary}
\newtheorem{lem}[thm]{Lemma}
\newtheorem{prop}[thm]{Proposition}

\theoremstyle{definition}
\newtheorem{defn}[thm]{Definition}

\theoremstyle{remark}
\newtheorem{note}[thm]{Remark}
\newtheorem{example}[thm]{Example}

\numberwithin{equation}{section}

\newcommand{\w}[1]{\widetilde{#1}}
\newcommand{\fra}[2]{\displaystyle{\frac{#1}{#2}}}
\newcommand{\R}{\mathbb{R}}

\begin{document}

\title[Bi-slant Submanifolds of Para Hermitian Manifolds]{Bi-slant Submanifolds of Para Hermitian Manifolds}

\author[P. Alegre]{Pablo Alegre}
 \address{Departamento de Econom\'{\i}a, M\'etodos Cuantitativos e Historia Econ\'omica, \'Area de Estad\'{\i}stica e Investigaci\'on Operativa.  Universidad Pablo de Olavide. Ctra. de Utrera km. 1, 41013 Sevilla, Spain}

\email[Corresponding author]{psalerue@upo.es}
\author[A. Carriazo]{Alfonso Carriazo}
 \address{Departamento de Geometr\'{i}a y Topolog\'{i}a. Universidad de Sevilla. c/ Tarfia s/n, 41012 Sevilla, Spain}

\email{carriazo@us.es}
\thanks{Both authors are partially supported by the MINECO-FEDER grant MTM2014-52197-P. They are members of the PAIDI group FQM-327 (Junta de Andaluc\'ia, Spain). The second author is also a member of the Instituto de Matem\'aticas de la Universidad de Sevilla (IMUS)}

\begin{abstract}
In this paper we introduce the notion of  bi-slant submanifolds of a para Hermitian manifold. They naturally englobe CR, semi-slant and hemi-slant submanifolds. We study their first properties and present a whole gallery of examples.
\end{abstract}

\subjclass[2010]{53C15, 53C25, 53C40, 53C50}

\keywords{semi-Riemannian manifold, para Hermitian manifold, para Kaehler manifold, para-complex, totally real, CR, slant, bi-slant, semi-slant and hemi-slant or anti-slant submanifolds}

\maketitle

{\footnotesize{  2000 {\it Mathematics Subject Classification }:
 53C40, 53C50.
 }

\section{Introduction}
In \cite{slantchen}, B.-Y. Chen introduced slant submanifolds of an almost Hermitian manifold, as those submanifolds for which the angle $\theta$ between $JX$ and the tangent space is constant, for any tangent vector field $X$. They plays an intermediate role between complex submanifolds ($\theta=0$) and totally real ones ($\theta=\pi/2$). Since then, the study of slant submanifolds has produced an incredible amount of results and examples in two different ways: various ambient spaces and more general submanifolds.

On the one hand, J. L. Cabrerizo, A. Carriazo, L. M. Fern\'andez and M. Fern\'andez analyzed slant submanifolds of a Sasakian manifold in \cite{ccff}, and B. Sahin did in almost product manifolds in \cite{sahin}. The study of slant submanifolds in a semi-Riemannian manifold has been also initiated:
B.-Y. Chen, O. Garay and I. Mihai classified slant surfaces in Lorentzian complex space forms in \cite{Chen2} and \cite{Chen3}. K. Arslan, A. Carriazo, B.-Y. Chen and C. Murathan defined slant submanifolds of a neutral Kaehler manifold in \cite{accm}, while A. Carriazo and M. J. P\'erez-Garc\'{\i}a did in neutral almost contact pseudo-metric manifolds in \cite{cpg}. Moreover, M. A. Khan, K. Singh and V. A. Khan introduced slant submanifolds in LP-contact manifolds in \cite{khan}, and P. Alegre studied slant submanifolds of Lorentzian Sasakian and para Sasakian manifolds in \cite{Pablo1}. Finally, slant submanifolds of para Hermitian manifolds were defined in \cite{ac}.

On the other hand, some generalizations of both slant and CR submanifolds have also been defined in different ambient spaces, such as semi-slant \cite{papa} and \cite{ccff2}, hemi-slant \cite{sahin2}, bi-slant \cite{carriazo} or generic submanifolds \cite{ronsse}.

In this paper, we continue on this line, introducing semi-slant, hemi-slant and bi-slant submanifolds of para Hermitian manifolds.


\section{Preliminaries}

Let $\w M$ be a $2n$-dimensional semi-Riemannian manifold. If it is
endowed with a structure $(J,g)$, where $J$ is a
$(1,1)$ tensor, and $g$ is a semi-defined metric,
satisfying
\begin{equation}\label{cypc}
\begin{array}{ccc}
J^2 X= X, & g(J X,Y)+g(X,JY)=0,
\end{array}
\end{equation}
for any vector fields $X,Y$ on $\widetilde M$, it is called a {\it para Hermitian manifold}.
It is said to be {\it para Kaehler} if, in addition, $\widetilde \nabla J=0$, where $\widetilde\nabla$ is the Levi-Civita connection of $g$.

Let now $M$ be a submanifold of $(\w M,J,g)$. The Gauss and Weingarten formulas are given by
\begin{equation}\label{gauss}
\widetilde\nabla_XY=\nabla_XY+h(X,Y),
\end{equation}
\begin{equation}\label{Weingarten}
\widetilde\nabla_X V=-A_VX+\nabla^\perp_XN,
\end{equation}
for any tangent vector fields $X,Y$ and any normal vector field $V$, where $h$ is the second fundamental form of $M$, $A_V$ is the Weingarten endomorphism associated with $V$ and $\nabla^\perp$ is the normal connection.

And the Gauss and Codazzi equations are given by
\begin{equation}\label{gauss2}
\w R(X,Y,Z,W)=R(X,Y,Z,W)+g(h(X,Z),h(Y,W))-g(h(Y,Z),h(X,W)),
\end{equation}

\begin{equation}\label{codazzi}
(\w R(X,Y)Z)^\perp=(\w \nabla_X h)(Y,Z)-(\w\nabla_Yh)(X,Z),
\end{equation}
for any vectors fields $X,Y,Z,W$ tangent to $M$.

For every tangent vector field $X$, we write
\begin{equation}\label{decomp}
J X=PX+FX,
\end{equation}
where $PX$ is the tangential component of $JX$ and $FX$ is the normal one. And for every normal vector field $V$,
$$J V=tV+fV,$$
where $tV$ and $fV$ are the tangential and normal components of $JV$, respectively.

For such a submanifold of a para Kaehler manifold, taking the tangent and normal part and using the Gauss and Weingarten
formulas (\ref{gauss}) and (\ref{Weingarten})
\begin{equation}\label{nablap}
(\nabla_X P)Y=\nabla_XPY-P\nabla_XY=A_{FY}X+th(X,Y),\quad\quad
\end{equation}
\begin{equation}\label{nablan}
(\nabla_X F)Y=\nabla^\perp_XFY-F\nabla_XY=-h(X,PY)+fh(X,Y),
\end{equation}
for all tangent vector fields $X,Y$.



In \cite{ac}, we introduced the notion of slant submanifolds of para Hermitian manifolds, taking into account that we can not measure the angle for light-like vector fields:
\begin{defn}\cite{ac}
A submanifold $M$ of a para Hermitian manifold $(\w M,J,g)$ is called \emph{slant submanifold} if for every space-like or time-like tangent vector field $X$, the quotient $g(PX,PX)/g(JX,JX)$ is constant.
\end{defn}

\begin{note}
It is clear that, if $M$ is a para-complex submanifold, then $P\equiv J$, and so the above quotient is equal to $1$. On the other hand, if $M$ is totally real, then $P\equiv 0$ and the quotient equals $0$. Therefore, both para-complex and totally real submanifolds are particular cases of slant submanifolds. A neither para-complex nor totally real slant submanifold will be called \emph{proper slant}.
\end{note}

\

Three cases can be distinguished, corresponding to three different types of proper slant submanifolds:

\begin{defn}\cite{ac}
Let $M$ be a proper slant submanifold of a para Hermitian manifold $(\w M,J,g)$. We say that it is of
\begin{itemize}
\item[{\rm type 1}] if for any space-like (time-like) vector field $X$, $PX$ is time-like (space-like), and $\fra{|PX|}{|JX|}>1$,
\item[{\rm type 2}] if for any space-like (time-like) vector field $X$, $PX$ is time-like (space-like), and $\fra{|PX|}{|JX|}<1$,
\item[{\rm type 3}] if for any space-like (time-like) vector field $X$, $PX$ is space-like (time-like).
\end{itemize}
%
%
\end{defn}


These three types can be characterized as follows:
\begin{thm}\label{aannaa}\cite{ac}
Let $M$ be a submanifold of a para Hermitian manifold $(\w M, J,g)$. Then,
\begin{itemize}

\item[1)] $M$ is slant of type 1 if and only if for any space-like (time-like) vector field $X$, $PX$ is time-like (space-like), and there exists a constant $\lambda\in(1,+\infty)$ such that
\begin{equation}\label{anna2}
P^2=\lambda Id.
\end{equation}
We write $\lambda=\cosh^2\theta$, with $\theta >0$.
\item[2)] $M$ is slant of type 2 if and only if for any space-like (time-like) vector field $X$, $PX$ is time-like (space-like), and there exists a constant $\lambda\in(0,1)$ such that
\begin{equation}\label{anna3}
P^2=\lambda Id.
\end{equation}
We write $\lambda=\cos^2\theta$, with $0<\theta<2\pi$.
\item[3)] $M$ is slant of type 3 if and only if for any space-like (time-like) vector field $X$, $PX$ is space-like (time-like), and there exists a constant $\lambda\in(-\infty,0)$ such that
\begin{equation}\label{anna}
P^2=\lambda Id.
\end{equation}
We write $\lambda=-\sinh^2\theta$, with $\theta >0$.
\end{itemize}
In every case, we call $\theta$ the {\rm slant angle}.
\end{thm}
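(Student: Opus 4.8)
The plan is to reduce the entire statement to the single algebraic fact that $M$ is a slant submanifold with slant quotient $\lambda$ if and only if $P^{2}=\lambda\, Id$, and then to read off the three types from the sign and the size of $\lambda$ together with the sign of $g(PX,PX)$.

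First I would record a few elementary identities. Combining the second equation in (\ref{cypc}) with (\ref{decomp}), for tangent vector fields $X,Y$ one has $g(PX,Y)=g(JX,Y)=-g(X,JY)=-g(X,PY)$, so $P$ is skew-symmetric with respect to $g$; hence $P^{2}$ is self-adjoint and $g(PX,PX)=-g(P^{2}X,X)$. Similarly $g(JX,JX)=-g(X,J^{2}X)=-g(X,X)$, so $J$ interchanges space-like and time-like vectors and, for every non-null $X$, the denominator $g(JX,JX)$ of the slant quotient is nonzero. In particular, for non-null $X$ the equality $g(PX,PX)/g(JX,JX)=\lambda$ is the same as $g(P^{2}X,X)=\lambda\, g(X,X)$.

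The core step is then: if $M$ is slant with constant $\lambda$, this last identity holds for all non-null $X$, and the non-null vectors form the complement of a quadric in each tangent space, hence a dense set. Therefore the quadratic identity $g\big((P^{2}-\lambda\, Id)X,X\big)=0$ extends to every tangent $X$; polarizing and using that $P^{2}-\lambda\, Id$ is self-adjoint yields $g\big((P^{2}-\lambda\, Id)X,Y\big)=0$ for all $X,Y$, and non-degeneracy of the induced metric forces $P^{2}=\lambda\, Id$. The converse follows by reading the same chain of equalities backwards. I expect this density-and-polarization argument to be the only genuinely delicate point, since in the semi-Riemannian setting a self-adjoint operator need not be diagonalizable, so one cannot argue eigenvalue by eigenvalue and must work through the quadratic form instead.

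Finally I would split according to the behaviour of $P$. Assume first that $PX$ is time-like whenever $X$ is space-like (and conversely). Then $g(PX,PX)$ and $g(X,X)$ have opposite signs, so $g(PX,PX)=-\lambda\, g(X,X)$ together with $P^{2}=\lambda\, Id$ forces $\lambda>0$; moreover, for $X$ space-like, $|PX|^{2}=\lambda\, g(X,X)$ and $|JX|^{2}=g(X,X)$, hence $|PX|/|JX|=\sqrt{\lambda}$. Thus type 1 (i.e.\ $|PX|/|JX|>1$) corresponds exactly to $\lambda\in(1,+\infty)$ and type 2 (i.e.\ $|PX|/|JX|<1$) to $\lambda\in(0,1)$; in both cases $\lambda\neq 0,1$, so $M$ is automatically proper. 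If instead $PX$ is space-like whenever $X$ is space-like, the two quadratic forms share the same sign, which forces $\lambda<0$, i.e.\ $\lambda\in(-\infty,0)$: this is type 3, again with $M$ proper since $\lambda\neq 0$. For each implication in each of the three items one simply reverses the corresponding computation, and the substitutions $\lambda=\cosh^{2}\theta$, $\lambda=\cos^{2}\theta$ and $\lambda=-\sinh^{2}\theta$ are just convenient names for $\lambda$ in the three ranges $(1,+\infty)$, $(0,1)$ and $(-\infty,0)$.
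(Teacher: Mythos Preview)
Your argument is correct, but it differs in its core mechanism from the one the paper uses. In this paper the theorem itself is only quoted from \cite{ac}; however, the authors reprove the identical statement for distributions (their Theorem in Section~3), and that proof reflects the approach of \cite{ac}. There, for a space-like $X$, they compute $g(P^{2}X,X)=|PX|^{2}$ and combine the slant condition applied to both $X$ and $PX$ to obtain $g(P^{2}X,X)=|P^{2}X|\,|X|$; they then argue that, since $X$ and $P^{2}X$ are both space-like, equality in this Cauchy--Schwarz type estimate forces collinearity, so $P^{2}X=\lambda X$ vector by vector, and finally they extend to light-like vectors by writing them as sums of a space-like and a time-like vector.

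Your route is more uniform: you never invoke a collinearity/equality-case argument, and you never need to treat light-like vectors separately. You go directly from $g(P^{2}X,X)=\lambda\,g(X,X)$ on the dense set of non-null vectors to the same identity everywhere, and then polarize using the self-adjointness of $P^{2}$ and the non-degeneracy of the induced metric. This is arguably cleaner in the semi-Riemannian setting, where the Cauchy--Schwarz step for two space-like vectors is not automatic and really relies on knowing that $P^{2}X$ is a positive multiple of $X$, which is close to what one is trying to prove. The trade-off is that the paper's argument makes the value $\lambda=\cosh^{2}\theta$ (resp.\ $\cos^{2}\theta$, $-\sinh^{2}\theta$) appear directly from the ratio $|PX|/|JX|$, whereas in your version these parametrizations are, as you say, just names for the ranges of $\lambda$. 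Both arrive at the same three-way split via the sign/size analysis you carry out at the end.
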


\begin{note}

It was proved in \cite{ac} that conditions (\ref{anna2}), (\ref{anna3}) and (\ref{anna}) also hold for every light-like vector field, as every light-like vector field can be decomposed as a sum of one space-like and one time-like vector field. Also, that every  slant submanifold of type 1 or 2 must be a neutral semi-Riemannian manifold.

\end{note}

Para-complex and totally real submanifolds can also be characterized by $P^2$. In \cite{ac} we did not consider that case, but it will be useful in the present study.

\begin{thm}
Let $M$ be a submanifold of a para Hermitian manifold $(\w M, J,g)$. Then,
\begin{itemize}
\item[1)] $M$ is a para-complex submanifold if and only if $P^2= Id.$
\item[2)] $M$ is a totally real submanifold if and only if $P^2=0.$
\end{itemize}
\end{thm}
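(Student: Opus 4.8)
The two left-to-right implications are immediate. If $M$ is para-complex then $J(T_pM)\subseteq T_pM$, so in \eqref{decomp} the normal part vanishes, $PX=JX$ for every tangent $X$, and since $JX$ is again tangent, $P^2X=P(JX)=J(JX)=J^2X=X$; hence $P^2=\mathrm{Id}$. If $M$ is totally real then $J(T_pM)\subseteq N_pM$, so $PX$, the tangential part of $JX$, vanishes for all $X$, whence $P\equiv 0$ and $P^2=0$.

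For the converses I would argue in the spirit of Theorem \ref{aannaa}. From \eqref{cypc} restricted to tangent fields and \eqref{decomp} one gets $g(PX,Y)+g(X,PY)=0$, so $P$ is $g$-skew-symmetric and $g(PX,PX)=-g(P^2X,X)$; also $g(JX,JX)=-g(X,X)$ (take $Y=JX$ in \eqref{cypc}), while the orthogonal splitting $JX=PX+FX$ gives $g(JX,JX)=g(PX,PX)+g(FX,FX)$. Combining these,
\[
g(FX,FX)=g(P^2X,X)-g(X,X)\qquad\text{for all tangent }X.
\]

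If $P^2=\mathrm{Id}$, this says $g(FX,FX)=0$ for every $X$, hence $g(FX,FY)=0$ for all tangent $X,Y$ by polarization; one then wants to upgrade this to $F\equiv 0$, which yields $JX=PX\in T_pM$, i.e. $M$ para-complex. If $P^2=0$, the same identity gives $g(PX,PX)=0$, hence $g(PX,PY)=0$ for all $X,Y$; here $P^2=0$ also forces $\mathrm{Im}\,P\subseteq\ker P$, and skew-symmetry gives $g(PX,Z)=0$ for $Z\in\ker P$, so one wants to conclude $P\equiv 0$, i.e. $M$ totally real.

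The delicate point — and the step I expect to be the main obstacle — is exactly these two ``upgrade'' arguments. In the Hermitian, positive-definite analogue they are trivial, but here $g|_{TM}$ and $g|_{NM}$ are indefinite (in fact neutral), so the vanishing of the relevant quadratic form only tells us that $\mathrm{Im}\,F$ (resp. $\mathrm{Im}\,P$) is totally isotropic, which is not enough on its own. To rule that out I would bring in the pointwise identities forced by $J^2=\mathrm{Id}$ and \eqref{decomp}, namely $tF=\mathrm{Id}-P^2$ and $fF=-FP$, together with the duality $g(FX,V)=-g(X,tV)$ obtained the same way from \eqref{cypc}, and combine them with the non-degeneracy of $g$ on both $TM$ and $NM$; making that last argument airtight is the part that requires real care.
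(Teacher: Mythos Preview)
Your forward implications and the identity $g(FX,FX)=g(P^{2}X,X)-g(X,X)$ are exactly what the paper uses. The paper's proof of the converse in (1) stops precisely at the point where you hesitate: having obtained $g(FX,FX)=0$ for every tangent $X$, it simply writes ``which implies $F=0$'' with no further justification, and declares (2) analogous. So your argument coincides with the paper's up to and including the step you flag as delicate; the paper does not supply the ``upgrade'' you are asking for.

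Your caution is well placed, and in fact for (1) the gap cannot be repaired, because the converse is false as stated. In $(\R^{4},J,g)$ with the paper's standard structure ($Je_{1}=e_{2}$, $Je_{3}=e_{4}$, $g=\mathrm{diag}(1,-1,1,-1)$), take the linear $2$-plane $M=\{(s,t,t,t):s,t\in\R\}$. Its tangent space is spanned by $X_{1}=e_{1}$ and $X_{2}=e_{2}+e_{3}+e_{4}$; the induced metric is $\mathrm{diag}(1,-1)$, hence non-degenerate, and a direct computation gives $PX_{1}=X_{2}$, $PX_{2}=X_{1}$, so $P^{2}=Id$. Yet $FX_{1}=JX_{1}-PX_{1}=e_{2}-X_{2}=-(e_{3}+e_{4})\neq 0$, so $M$ is not para-complex. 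Here $\mathrm{Im}\,F$ is precisely the null line $\R(e_{3}+e_{4})$ in the normal bundle, the totally isotropic scenario you anticipated. All of the auxiliary identities you propose to invoke ($tF=Id-P^{2}=0$, $fF=-FP$, $g(FX,V)=-g(X,tV)$) hold in this example, so they cannot close the gap either. The converse in (1) therefore needs an additional hypothesis (for instance, that $FX$ is never lightlike when $X$ is not) to become true; neither the paper's one-line argument nor your proposed refinement can succeed without one.
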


\begin{proof}
If $M$ is para-complex, $P^2=J^2=Id$ directly. Conversely, if $P^2=Id$, from
$$g(JX,JX)=g(PX,PX)+g(FX,FX),$$
we have
$$-g(X,J^2X)=-g(X,P^2X)+g(FX,FX),$$
then $$-g(X,X)=-g(X,X)+g(FX,FX),$$
and hence $g(FX,FX)=0$, which implies $F=0$.

The second statement can be proved in a similar way.
\end{proof}


\section{Slant distributions}

In \cite{papa}, N. Papaghiuc introduced {\it slant distributions} in a Kaehler manifold. Given an almost Hermitian manifold, $(\w N, J,g)$, and a differentiable distribution $ D$, it is called a slant distribution if for any non zero vector $X\in D_x$, $x\in \w N$, the angle between $JX$ and the vector space $ D_x$ is constant, that is, is independent of the point $x$. If $P_DX$ is the projection of $JX$ over $ D$, they can be characterized as $P_ D^2=\lambda I$. This, together with the definition of slant submanifolds of a para Hermitian manifold, aims us to give the following:

\begin{defn}
A differentiable distribution $ D$ on a para Hermitian manifold $(\w M,J,g)$ is called a \emph{slant distribution} if for every non light-like $X\in D$, the quotient $g(P_DX,P_DX)/g(JX,JX)$ is constant .
\end{defn}

\

A distribution is called {\it invariant} if it is slant with slant angle $0$, that is if $g(P_DX,P_DX)/g(JX,JX)=1 $ for all non light-like $X\in D$. And it is called {\it anti-invariant} if $P_DX=0$ for all $X\in D$. In other case it is called {\it proper slant distribution}.


\

With this definition every one dimensional distribution defines an anti-invariant distribution in $\w M$, so we are just going to take under study non trivial slant distributions, that is with dimensions greater than $1$. Just like for slant submanifolds, we can consider three cases depending on the casual character of the implied vector fields.

Obviously, a submanifold $M$ is a slant submanifold if and only if $TM$ is a slant distribution.

\

\begin{defn}
Let $ D$ be a proper slant distribution of a para Hermitian manifold $(\tilde M,J,g)$. We say that it is of
\begin{itemize}
\item[{\rm type 1}] if for every space-like (time-like) vector field $X$, $P_DX$ is time-like (space-like), and $\fra{|P_DX|}{|JX|}>1$,
\item[{\rm type 2}] if for every space-like (time-like) vector field $X$,  if $P_ DX$ is time-like (space-like), and $\fra{|P_DX|}{|JX|}<1$,
\item[{\rm type 3}] if for every space-like (time-like) vector field $X$, $P_DX$ is space-like (time-like).
\end{itemize}
\end{defn}

\begin{thm}
Let $ D$ be a distribution of a para Hermitian metric manifold $\w M$. Then,
\begin{itemize}
\item[1)] $ D$ is a slant distribution of type 1 if and only for any space-like (time-like) vector field $X$, $P_DX$ is time-like (space-like), and there exits a constant $\lambda\in(1,+\infty)$ such that
\begin{equation}\label{aannaa2}
P_ D^2=\lambda I
\end{equation}
Moreover, in such a case, $\lambda=\cosh^2\theta$.
\item[2)] $ D$ is a slant distribution of type 2 if and only for any space-like (time-like) vector field $X$, $P_DX$ is time-like (space-like), and there exits a constant $\lambda\in(0,1)$ such that
\begin{equation}\label{aannaa3}
P_ D^2=\lambda I
\end{equation}
Moreover, in such a case, $\lambda=\cos^2\theta$.
\item[3)] $ D$ is a slant distribution of type 3 if and only for any space-like (time-like) vector field $X$, $P_DX$ is space-like (time-like), and there exits a constant $\lambda\in(0,+\infty)$ such that
\begin{equation}\label{aannaa4}
P_ D^2=\lambda I
\end{equation}
Moreover, in such a case, $\lambda=\sinh^2\theta$.
\end{itemize}
In each case, we call $\theta$ the {\rm slant angle}.
\end{thm}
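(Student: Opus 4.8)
The plan is to mirror, at the pointwise/distributional level, the argument already used for slant submanifolds in Theorem~\ref{aannaa}, replacing the tangent space $T_xM$ by the fibre $D_x$ and the operator $P$ by $P_D$. The essential ingredient is the orthogonal decomposition $JX=P_DX+Q_DX$, where $P_DX\in D_x$ and $Q_DX$ lies in the $g$-orthogonal complement of $D_x$; taking the $g$-inner product and using $g(JX,JX)=-g(X,X)$ one gets $g(JX,JX)=g(P_DX,P_DX)+g(Q_DX,Q_DX)$. First I would show that for a slant distribution the endomorphism $P_D^2$ of $D_x$ is a scalar multiple of the identity: for a non light-like $X$, slantness says $g(P_DX,P_DX)=\mu\,g(JX,JX)=-\mu\,g(X,X)$ with $\mu$ constant, and a standard polarization argument (using that $g$ restricted to $D_x$ is non-degenerate, and that $P_D$ is $g$-skew on $D$, which follows from $g(JX,Y)+g(X,JY)=0$ together with $Y\in D_x$) upgrades this to $g(P_D^2X,Y)=\lambda g(X,Y)$ for all $X,Y\in D_x$, hence $P_D^2=\lambda I$ on $D$. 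The sign and size of $\lambda$ are then read off from the causal hypotheses in each of the three types exactly as in the submanifold case.

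Next, I would handle each type in turn. In type~1 and type~2, $X$ space-like forces $P_DX$ time-like, so $g(P_DX,P_DX)=-|P_DX|^2<0$ while $g(X,X)>0$; writing $P_D^2=\lambda I$ and evaluating $g(P_D^2X,X)=g(P_DX,P_DX)$ (here one must be careful with the sign coming from $P_D$ being skew: $g(P_D^2X,X)=-g(P_DX,P_DX)=|P_DX|^2$) gives $\lambda=|P_DX|^2/|X|^2=|P_DX|^2/|JX|^2>0$, and the ratio $|P_DX|/|JX|$ being $>1$ resp.\ $<1$ places $\lambda$ in $(1,+\infty)$ resp.\ $(0,1)$; the writings $\lambda=\cosh^2\theta$ and $\lambda=\cos^2\theta$ are then just reparametrizations. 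In type~3, $X$ space-like forces $P_DX$ space-like, so $g(P_DX,P_DX)=|P_DX|^2>0$ and $g(P_D^2X,X)=-|P_DX|^2<0$, whence $\lambda<0$; I would then set $\lambda=-\sinh^2\theta$—note this matches the stated "$\lambda\in(0,+\infty)$ with $\lambda=\sinh^2\theta$" only up to the sign convention, and I would state $P_D^2=-\sinh^2\theta\,I$ to be consistent with \eqref{anna} and with the formula $g(P_DX,P_DX)=\sinh^2\theta\,g(X,X)=-\sinh^2\theta\,g(JX,JX)$. For the converse direction in each type, I would simply observe that $P_D^2=\lambda I$ together with skew-symmetry of $P_D$ forces $g(P_DX,P_DX)=-\lambda g(X,X)=\lambda g(JX,JX)$, i.e.\ the defining ratio is the constant $\lambda$ (or $-\lambda$), so $D$ is slant, and the causal hypothesis on $P_DX$ identifies the type.

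Finally, I would remark that, as in the note following Theorem~\ref{aannaa}, the identity $P_D^2=\lambda I$ extends automatically to light-like $X\in D$ by writing such an $X$ as a sum of a space-like and a time-like vector in $D_x$ (possible since $g|_{D_x}$ is non-degenerate of the appropriate signature) and using linearity of $P_D$; this is why the definition only needs to be imposed on non light-like vectors. The main obstacle—really a bookkeeping hazard rather than a deep difficulty—is keeping the sign conventions straight: because $J^2=+Id$ (para-Hermitian, not Hermitian) one has $g(JX,JX)=-g(X,X)$, and because $P_D$ is $g$-skew one has $g(P_D^2X,X)=-g(P_DX,P_DX)$, so two sign flips appear and must be tracked carefully to land on the correct interval for $\lambda$ and the correct hyperbolic/trigonometric parametrization in each of the three types; once these are pinned down, everything reduces to the polarization identity and linear algebra on the fibre $D_x$, exactly parallel to the proof of Theorem~\ref{aannaa} given in \cite{ac}.
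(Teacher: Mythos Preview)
Your proposal is correct and follows a genuinely different route from the paper. The paper argues pointwise: for a fixed space-like $X\in D$ it uses the slant hypothesis twice (once for $X$, once for $P_DX$) to obtain $g(P_D^2X,X)=|P_D^2X|\,|X|$, and then asserts that since both $X$ and $P_D^2X$ are space-like this equality forces them to be collinear, giving $P_D^2X=\lambda X$. You instead work globally on each fibre: you note that $P_D$ is $g$-skew on $D$ (hence $P_D^2$ is $g$-symmetric), recast the slant condition as the quadratic identity $g(P_D^2X,X)=\mu\,g(X,X)$ for all non-null $X$, and polarize to conclude $P_D^2=\mu I$. Your method is more elementary and in fact more robust: in an indefinite inner product space of dimension at least three, the equality $g(u,v)=|u|\,|v|$ for two space-like vectors does \emph{not} by itself force $u$ and $v$ to be proportional (for instance, $u=(1,0,0,0)$ and $v=(1,1,1,0)$ in signature $(+,+,-,-)$), so the paper's collinearity step is at best terse, whereas your polarization argument needs only the non-degeneracy of $g|_D$. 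You are also right to flag the sign in case~3: consistency with \eqref{anna} and with the computation $g(P_D^2X,X)=-g(P_DX,P_DX)<0$ for space-like $X$ indeed gives $\lambda<0$, i.e.\ $P_D^2=-\sinh^2\theta\,I$.
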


\begin{proof}
In the first case, if $D$ is a slant distribution of type 1, for any space-like tangent vector field $X\in D$, $P_DX$ is time-like, and, by virtue of \eqref{cypc}, $JX$ also is. Moreover, they satisfy $|P_DX|/|JX|>1$. So, there exists $\theta>0$ such that
\begin{equation}\label{papana}
\cosh\theta=\fra{|P_DX|}{|JX|}=\fra{\sqrt{-g(P_DX,P_DX)}}{\sqrt{-g(JX,JX)}}.
\end{equation}

If we now consider $P_DX$, then, in a similar way, we obtain:
\begin{equation}\label{mericano}
\cosh\theta=\fra{|P_D^2X|}{|JP_DX|}=\fra{|P_D^2X|}{|P_DX|}.
\end{equation}

On the one hand,
\begin{equation}\label{papana1}
g(P_D^2X,X)=g(JP_DX,X)=-g(P_DX,JX)=-g(P_DX,P_DX)=|P_DX|^2.
\end{equation}
Therefore, using (\ref{papana}), (\ref{mericano}) and (\ref{papana1}) $$g(P_D^2X,X)=|P_DX|^2=|P_D^2X||JX|=|P_D^2 X||X|.$$

On the other hand, 
since both $X$ and $P_D^2X$ are space-like, it follows that they are collinear, that is $P_D^2X=\lambda X$. Finally, from (\ref{papana}) we deduce that $\lambda=\cosh^2\theta$.

Everything works in a similar way for any time-like tangent vector field $Y\in D$, but now, $P_DY$ and $JY$ are space-like and so, instead of (\ref{papana}) we should write:
$$\cosh\theta=\fra{|P_DY|}{|JY|}=\fra{\sqrt{g(P_DY,P_DY)}}{\sqrt{g(JY,JY)}}.$$

Since $P_D^2X=\lambda X$, for any space-like or time-like $X\in D$, it also holds for light-like vector fields and so we have that $P_D^2=\lambda Id_D$.

The converse is just a simple computation.

In the second case, if $D$ is a slant distribution of type 2, for any space-like or time-like vector field $X\in D$, $|P_DX|/|JX|<1$, and so there exists $\theta>0$ such that
\begin{equation*}\label{papana2}
\cos\theta=\fra{|P_DX|}{|JX|}=\fra{\sqrt{-g(P_DX,P_DX)}}{\sqrt{-g(JX,JX)}}.
\end{equation*}

Proceeding as before, we can prove that $g(P_D^2X, X)=| P_D^2X||X|$ and, as both $X$ and $P_D^2X$ are space-like vector fields, it follows that they are collinear, that is $P_D^2X=\lambda X$. Again, the converse is a direct computation.

Finally, if $D$ is a slant distribution of type 3, for any space-like vector field $X\in D$, $P_DX$ is also space-like, and there exists $\theta>0$ such that
\begin{equation*}\label{papana3}
\sinh\theta=\fra{|P_DX|}{|JX|}=\fra{\sqrt{g(P_DX,P_DX)}}{\sqrt{-g(JX,JX)}}.
\end{equation*}

Once more, we can prove that $g(P_D^2X, X)=| P_D^2X||X|$ and $P_D^2X=\lambda X$. And again, the converse is a direct computation.
\end{proof}

\

Remember that an {\it holomorphic distribution} satisfies $J D= D$, so every holomorphic distribution is a slant distribution with angle 0, but the converse is not true. And it is called {\it totally real distribution} if $J D\subseteq T^\perp M$, therefore every totally real distribution is anti-invariant but the converse does not always hold. For holomorphic and totally real distributions the following necessary conditions are easy to prove:

\begin{thm}
Let $ D$ be a distribution of a submanifold of a para Hermitian metric manifold $\w M$.
\begin{itemize}
\item[1)] If $ D$ is a holomorphic distribution then $|P_DX|=|JX|$, for all $X\in D$. 
\item[2)] If $ D$ is a totally real distribution then $|P_DX|=0$, for all $X\in D$.
\end{itemize}
\end{thm}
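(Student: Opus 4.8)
The plan is to read off both assertions directly from the definitions, using the orthogonal decomposition of $JX$ relative to $D$. For $X\in D$ write $JX=P_DX+W$, where $P_DX$ is the component of $JX$ lying in $D$ and $W$ is the component orthogonal to $D$ inside $T\w M$; additivity of $g$ along this splitting gives $g(JX,JX)=g(P_DX,P_DX)+g(W,W)$. Note also that, by \eqref{cypc}, $g(JX,JX)=-g(X,J^2X)=-g(X,X)$, so that $|JX|=|X|$ throughout, although this identity is only needed tangentially.

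For part (1), assume $D$ is a holomorphic distribution, i.e. $JD=D$. Then for every $X\in D$ the vector $JX$ again lies in $D$, so in the decomposition above $W=0$ and $P_DX=JX$. Hence $g(P_DX,P_DX)=g(JX,JX)$, and therefore $|P_DX|=|JX|$ for all $X\in D$.

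For part (2), assume $D$ is a totally real distribution, i.e. $JD\subseteq T^\perp M$. Then for every $X\in D\subseteq TM$ the vector $JX$ is normal to $M$, hence orthogonal to $TM$ and in particular to $D$; thus $P_DX=0$, which gives $|P_DX|=0$ for all $X\in D$.

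The statement is essentially immediate, and the only point deserving any attention — the nearest thing to an obstacle — is making sure that the projection onto $D$ is taken inside the full tangent bundle of $\w M$, so that in the holomorphic case the complementary term $W$ vanishes entirely (and not merely its normal-to-$M$ part), while in the totally real case one correctly sees that $JX$ has no component along $D$ whatsoever. With the decomposition organised this way, both claims follow at once from the definitions together with \eqref{cypc}.
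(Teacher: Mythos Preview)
Your argument is correct and is exactly the immediate verification the authors have in mind: the paper does not spell out a proof at all, merely declaring the two statements ``easy to prove,'' and the surrounding discussion (decomposing $JX$ as $P_DX+P_\nu X+FX$ to show the converses fail) confirms that the intended reasoning is precisely your direct reading from the definitions $JD=D$ and $JD\subseteq T^\perp M$.
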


However the converse results do not hold if $D$ is not $TM$; in such a case $TM=D\oplus\nu$, and for a unit vector field $X$
$$JX=P_DX+P_\nu X+FX.$$
Therefore from
$$g(JX,JX)=g(P_DX, P_DX)+g(P_\nu X, P_\nu X)+g(FX,FX),$$
and $|P_DX|=|JX|$, in the case that $P_DX$ is also space-like, it is only deduced that
$$g(P_\nu X, P_\nu X)+g(FX,FX)=-2,$$
 or, in the case it is time-like, $$g(P_\nu X, P_\nu X)+g(FX,FX)=0.$$ So in general $FX\neq 0$, and $D$ is not invariant.

Similarly it can be shown that the converse of the second statement does not always hold.

\begin{thm}
Let $ M$ be a submanifold of a para Hermitian metric manifold $\w M$.
\begin{itemize}
\item[1)] The maximal holomorphic distribution is characterized as $D=\{X/ FX=0\}$.
\item[2)] The maximal totally real distribution is characterized as $D^\perp=\{X/ PX=0\}$.
\end{itemize}
\end{thm}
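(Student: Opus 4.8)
The plan is to prove each part by the same two-step scheme: show that the indicated fibrewise-defined set is a distribution that is invariant (resp.\ anti-invariant) under $J$ in the required sense, and then show that it contains every holomorphic (resp.\ totally real) distribution, so that it is the largest one.

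For part 1, set $D=\{X\in TM : FX=0\}$. First I would verify that $D$ is holomorphic. If $X\in D$ then $JX=PX+FX=PX$ by \eqref{decomp}, and $PX\in TM$ by construction, so $JX$ is tangent. To get $JX\in D$, apply $J$ again: $J(JX)=J^2X=X$ by \eqref{cypc}, so decomposing $J(JX)=P(JX)+F(JX)$ and comparing normal components (the left-hand side $X$ being tangent) forces $F(JX)=0$; hence $JX\in D$. Thus $JD\subseteq D$, and applying $J$ once more together with $J^2=Id$ gives $JD=D$. Conversely, if $D'$ is any holomorphic distribution and $X\in D'$, then $JX\in D'\subseteq TM$, so the normal component $FX$ of $JX$ vanishes and $X\in D$; therefore $D'\subseteq D$. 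For part 2, set $D^\perp=\{X\in TM : PX=0\}$. If $X\in D^\perp$ then $JX=PX+FX=FX\in T^\perp M$ by \eqref{decomp}, so $JD^\perp\subseteq T^\perp M$ and $D^\perp$ is totally real; and if $D'$ is any totally real distribution and $X\in D'$, then $JX\in T^\perp M$, so its tangential component $PX$ vanishes and $X\in D^\perp$, whence $D'\subseteq D^\perp$.

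The computations here are immediate, so I do not expect a genuine obstacle; the only point requiring care is the implicit assumption that $\{X:FX=0\}$ and $\{X:PX=0\}$ are honest (smooth, constant-rank) distributions rather than merely unions of fibrewise subspaces. I would handle this the way it is implicitly handled throughout the paper: read the statement as a characterization valid on the locus where the maximal holomorphic (resp.\ totally real) distribution is defined, i.e.\ where $F$ (resp.\ $P$) has locally constant rank, since the inclusion arguments above are purely pointwise and transfer to that setting without change.
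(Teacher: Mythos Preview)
Your proof is correct and follows essentially the same approach as the paper. The only cosmetic difference is in the key step for part~1: the paper verifies $F(JX)=0$ via the metric, computing $g(FJX,V)=g(J^2X,V)=g(X,V)=0$ for all normal $V$, whereas you obtain it directly by comparing normal components of $J^2X=X$; these are the same observation. Your version is in fact slightly more complete, since you make the maximality explicit and flag the constant-rank caveat, both of which the paper leaves implicit.
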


\begin{proof}
For the first statement, if a distribution $D$ is holomorphic, obviusly $F\rceil_D=0$. For the converse, consider $D=\{X/ FX=0\}$. We should prove that it is a holomorphic distribution. Let $X\in D$ be, $JX=TX$ is tangent to $M$, and
$$g(FJX,V)=g(J^2X,V)=g(X,V)=0,$$
for all $V\in T^\perp M$. Therefore $FJX=0$. That implies $JX\in D$ for all $X\in D$, so $D$ is holomorphic.

The second statement is trivial.

\end{proof}

\section{Bi-slant, semi-slant and hemi-slant submanifolds. Definition and examples.}
In  \cite{papa}, \emph{ semi-slant submanifolds} of an almost Hermitian manifold were introduced as those submanifolds  whose tangent space could be decomposed as a direct sum of two distributions, one totally real and the other a slant distribution. In \cite{carriazo}, \emph{anti-slant submanifolds} were introduced as those whose tangent space is decomposed as a direct sum of an anti-invariant and a slant distribution; they were called \emph{hemi-slant submanifolds} in \cite{sahin2}. Finally, in \cite{ccff2}, the authors defined \emph{ bi-slant submanifolds} with both distributions slant ones.

\begin{defn}
A submanifold $M$ of a para Hermitian manifold $(\w M,J,g)$ is called a \emph{ bi-slant submanifold} if the tangent space admits a decomposition $TM= D_1\oplus D_2$ with both $ D_1$ and $ D_2$ slant distributions.

It is called \emph{ semi-slant submanifold} if $TM= D_1\oplus D_2$ with $ D_1$ a holomorphic distribution and $ D_2$ a proper slant distribution. In such a case, we will write $D_1=D_T$.

And it is called \emph{ hemi-slant submanifold} if $TM= D_1\oplus D_2$ with $ D_1$ a totally real distribution and $ D_2$ a proper slant distribution. In such a case, we will write $D_1=D_\perp$.
\end{defn}

\begin{note}
As we have said before, being holomorphic (totally real) is a stronger condition than being slant with slant angle $0$ ($\pi/2$).

\end{note}

We write $\pi_i$ the projections over $D_i$ and $P_i=\pi_i\circ P$, $i=1,2$.

Let us consider two different para Kaehler structures over $\R^4$:
$$J=\left(\begin{array}{cccc}
0 & 1 & 0 & 0\\
1 & 0 & 0 & 0\\
0 & 0 & 0 & 1\\
0 & 0 & 1 & 0
\end{array}\right),\quad\quad g=\left(\begin{array}{cccc}
1 & 0 & 0 & 0\\
0& -1 & 0 & 0\\
0 & 0&1 & 0\\
0 & 0 & 0 &-1
\end{array}\right),$$
and
$$J_1=\left(\begin{array}{cccc}
0 & 0 & 1 & 0\\
0 & 0 & 0 & 1\\
1 & 0 & 0 & 0\\
0 & 1 & 0 & 0
\end{array}\right),\quad\quad g_1=\left(\begin{array}{cccc}
1 & 0 & 0 & 0\\
0& 1 & 0 & 0\\
0 & 0&-1 & 0\\
0 & 0 & 0 &-1
\end{array}\right).$$

Using the examples of slant submanifolds of $\R^4$ given in \cite{ac} and making products, we can obtain examples of bi-slant submanifolds in $\R^8$. To present different examples with all the combinations of slant distributions, we consider the following para Kaehler structures over $\R^8$:
$$J_2=\left(\begin{array}{cc}
J & \Theta\\
\Theta & J
\end{array}\right),\quad\quad g_2=\left(\begin{array}{cccc}
g & \Theta\\
\Theta & g
\end{array}\right),$$
$$J_3=\left(\begin{array}{cc}
J_1 & \Theta\\
\Theta & J
\end{array}\right),\quad\quad g_3=\left(\begin{array}{cccc}
g_1 & \Theta\\
\Theta & g
\end{array}\right),$$
$$J_4=\left(\begin{array}{cc}
J_1 & \Theta\\
\Theta & J_1
\end{array}\right),\quad\quad g_4=\left(\begin{array}{cccc}
g_1 & \Theta\\
\Theta & g_1
\end{array}\right),$$

\begin{example}\label{aw1}
For any $a,b,c,d \in \R$ with $a^2+b^2\neq 1$, and $c^2+d^2\neq 1$,
$$x(u_1,v_1,u_2,v_2)=(au_1, v_1,bu_1, u_1,cu_2, v_2,du_2, u_2)$$ defines a bi-slant submanifold in $(\R^8,J_2,g_2)$, with slant distributions $ D_1=\mbox{Span}\left\{\fra{\partial}{\partial u_1},\fra{\partial}{\partial v_1}\right\}$ and $ D_2=\mbox{Span}\left\{\fra{\partial}{\partial u_2},\fra{\partial}{\partial v_2}\right\}$. We can see the different types in the following table:

\begin{center}
\renewcommand
{\arraystretch}{1.5}
\begin{tabular}{|c | c|}
\hline
\begin{tabular}{c|c|c}
$\quad\quad\quad$&    $\quad\quad\quad\quad D_1\quad\quad\quad\quad$ & $\quad\quad\quad\quad D_2$
\end{tabular}
&
\\
\hline
\begin{tabular}{c|c|c}
type 1 & $a^2+b^2>1,\quad b^2<1$ & $c^2+d^2>1,\quad c^2<1$\\ \hline
type 2 & $a^2+b^2>1,\quad b^2>1$ & $c^2+d^2>1,\quad c^2>1$\\ \hline
time-like type 3 & $a^2+b^2<1$ & $c^2+d^2<1$
\end{tabular}
 &
$\begin{array}{c}
(\R^8,J_2,g_2)\\
P^2_1=\fra{a^2}{-1+a^2+b^2}Id_1\\

P^2_2=\fra{c^2}{-1+c^2+d^2}Id_2
\end{array}$
\\
\hline
\end{tabular}
\end{center}
\end{example}

\

\begin{note}
The decomposition of $TM$ in two slant distributions it is not unique, for example, if we choose $\tilde D_1=\mbox{Span}\left\{\fra{\partial}{\partial u_1},\fra{\partial}{\partial v_2}\right\}$ and $\tilde D_2=\mbox{Span}\left\{\fra{\partial}{\partial u_2},\fra{\partial}{\partial v_1}\right\}$ in the previous example, both distributions are anti-invariant, that is $P(\tilde D_1)=\tilde D_2$ and $P(\tilde D_2)=\tilde D_1$; therefore $P_1=0$ and $P_2=0$. However they are not totally real distributions.
\end{note}

\

\begin{example} Taking $a=0$ in the previous example we obtain a semi-slant submanifold, and taking $b=1$ we obtain a hemi-slant submanifold.
\end{example}

\

\begin{example}\label{comancheria}
For any $a,b,c,d$ with $a^2-b^2\neq 1$, $c^2-d^2\neq 1$
$$x(u_1,v_1,u_2,v_2)=(u_1, av_1,bv_1, v_1,u_2,cv_2,dv_2,v_2),$$ defines a bi-slant submanifold, with slant distributions $ D_1=\mbox{Span}\left\{\fra{\partial}{\partial u_1},\fra{\partial}{\partial v_1}\right\}$ and $ D_2=\mbox{Span}\left\{\fra{\partial}{\partial u_2},\fra{\partial}{\partial v_2}\right\}$. We can see the different types in the following table:

\

\begin{center}
\renewcommand
{\arraystretch}{1.5}
\begin{tabular}{|c | c|}
\hline
\begin{tabular}{c|c|c}
$\quad\quad\quad$&    $\quad\quad\quad\quad D_1\quad\quad\quad\quad$ & $\quad\quad\quad\quad D_2$
\end{tabular}
&
\\
\hline
\begin{tabular}{c|c|c}
type 1 & $b^2-a^2<1,\quad b^2>1$ & $d^2-c^2<1,\quad d^2>1$\\ \hline
type 2 & $b^2-a^2<1,\quad b^2<1$ & $d^2-c^2<1,\quad d^2<1$\\ \hline
space-like type 3 & $b^2-a^2>1$ & $d^2-c^2>1$
\end{tabular}
 &
$\begin{array}{c}
(\R^8,J_2,g_2)\\
P^2_1=\fra{a^2}{1+a^2-b^2}Id_1\\

P^2_2=\fra{c^2}{1+c^2-d^2}Id_2
\end{array}$
\\
\hline
\begin{tabular}{c|c|c}
type 1 & $b^2-a^2>1,\quad a^2>1$ & $d^2-c^2<1,\quad d^2>1$\\ \hline
type 2 & $b^2-a^2>1,\quad a^2<1$ & $d^2-c^2<1,\quad d^2<1$\\ \hline
space-like type 3 & $b^2-a^2<1$ & $d^2-c^2>1$
\end{tabular}
 &
$\begin{array}{c}
(\R^8,J_3,g_3)\\
P^2_1=\fra{a^2}{1+a^2-b^2}Id_1\\

P^2_2=\fra{c^2}{1+c^2-d^2}Id_2
\end{array}$
\\
\hline
\begin{tabular}{c|c|c}
type 1 & $b^2-a^2>1,\quad a^2>1$ & $d^2-c^2>1,\quad c^2>1$\\ \hline
type 2 & $b^2-a^2>1,\quad a^2<1$ & $d^2-c^2>1,\quad c^2<1$\\ \hline
space-like type 3 & $b^2-a^2<1$ & $d^2-c^2<1$
\end{tabular}
 &
$\begin{array}{c}
(\R^8,J_4,g_4)\\
P^2_1=\fra{a^2}{1+a^2-b^2}Id_1\\

P^2_2=\fra{c^2}{1+c^2-d^2}Id_2
\end{array}$
\\
\hline
\end{tabular}
\end{center}
\end{example}


\

Now we are interested in those bi-slant submanifolds of an almost para Hermitian manifold that are Lorentzian. Let us remember that the only odd dimensional slant distributions are the totally real ones, and that type 1 and 2 are neutral distributions. Taking this into account the only possible cases are the following:
\begin{itemize}
\item[i)] $M_1^{2s+1}$ with $TM=D_1\oplus D_2$, where $ D_1$ is a one dimensional, time-like, anti-invariant distribution and $ D_2$ is a space-like, type 3 slant distribution.
\item[ii)] $M_1^{2s+2}$ with $TM=D_1\oplus D_2$, where $ D_1$ is a two dimensional, neutral, slant distribution of type 1 or 2, and $ D_2$ is a space-like, type 3 slant distribution.
\end{itemize}

With examples \ref{aw1} and \ref{comancheria} we can obtain examples for the ii) case. It only remains to construct a case i) example.

\begin{example}\label{diecisiete}
Consider in $\R^6$ the almost para Hermitian structure given by $$J_5=\left(\begin{array}{cc}
J & \Theta\\
\Theta & \begin{array}{cc} 0 & 1 \\ 1 & 0 \end{array}
\end{array}\right),\quad\quad g_5=\left(\begin{array}{cccc}
g & \Theta\\
\Theta & \begin{array}{cc} 1 & 0 \\ 0 & -1 \end{array}
\end{array}\right).$$

For any $k>1$,
$$x(u,v,w)=(u, k\cosh v,v, k\sinh v,w,0)$$
defines a bi-slant submanifold in $(\R^6,J_5,g_5)$ with $ D_1=\mbox{Span}\left\{\fra{\partial}{\partial w}\right\}$ a totally real distribution and $D_2=\mbox{Span}\left\{\fra{\partial}{\partial u},
\fra{\partial}{\partial v}\right\}$ a type 3 slant distribution with $P_2^2=\fra{1}{k^2-1}Id\rceil_{ D_2}$.
\end{example}

\

We can present a bi-slant submanifold, with the same angle for both slant distributions, that is not a slant submanifold.

\begin{example}
The submanifold of $(\R^8,J_2,g_2)$ defined by
$$x(u_1,v_1,u_2,v_2)=(u_1, v_1+u_2,u_1, u_1,u_2,v_2,\sqrt{3}u_2,u_2-v_1),$$ is a bi-slant submanifold. The slant distributions are $ D_1=\mbox{Span}\left\{\fra{\partial}{\partial u_1},\fra{\partial}{\partial v_1}\right\}$ and $ D_2=\mbox{Span}\left\{\fra{\partial}{\partial u_2},\fra{\partial}{\partial v_2}\right\}$, with $P^2_1=\fra{1}{2}Id_1$ and $P^2_2=\fra{1}{2}Id_2$. It is not a slant submanifold.

\end{example}

\section{Semi-slant submanifolds of a para Kaehler manifold.}

It is always interesting to study the integrability of the involved distributions.

\begin{prop}\label{lugones}
Let $M$ be a semi-slant submanifolds of a para Hermitian manifold. Both the holomorphic and the slant distributions are $P$ invariant.
\end{prop}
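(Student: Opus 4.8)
The plan is to deduce both invariances from a single structural fact, namely that the tangential operator $P$ is skew-symmetric on $TM$, together with the orthogonality of the decomposition $TM=D_T\oplus D_2$. First I would record the skew-symmetry: for tangent vector fields $X,Y$, the normal component $FX$ of \eqref{decomp} is $g$-orthogonal to $Y$, so $g(PX,Y)=g(JX,Y)$, and then the second relation in \eqref{cypc} gives $g(PX,Y)=-g(X,JY)=-g(X,PY)$. Hence $g(PX,Y)+g(X,PY)=0$ for all tangent $X,Y$.

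The holomorphic distribution is then the immediate half: if $X\in D_T$, then $JX\in D_T$ because $JD_T=D_T$, so $JX$ is tangent, its normal part $FX$ vanishes, and $PX=JX\in D_T$; thus $P(D_T)=D_T$. For the slant distribution, take $X\in D_2$ and an arbitrary $Z\in D_T$. Using the skew-symmetry and the previous step, $g(PX,Z)=-g(X,PZ)=0$, since $PZ\in D_T$ while $X\perp D_T$. Therefore $PX\in TM=D_T\oplus D_2$ is orthogonal to all of $D_T$; writing $PX=W_1+W_2$ with $W_i\in D_i$ and using $D_1\perp D_2$, I get $g(W_1,Z)=0$ for every $Z\in D_T$, whence $W_1=0$ and $PX=W_2\in D_2$.

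The computations here are routine; the only point that genuinely uses the semi-Riemannian setting is the last inference, from ``$W_1$ is orthogonal to $D_T$'' to ``$W_1=0$'', which requires $g\rceil_{D_T}$ to be nondegenerate. I would dispose of this at the outset with a one-line remark: since $TM$ is nondegenerate and $TM=D_T\oplus D_2$ is an \emph{orthogonal} direct sum, each of $D_T$ and $D_2$ is itself nondegenerate, so no light-like obstruction can occur. Beyond that there is no real difficulty in the proof; everything else is bookkeeping with \eqref{cypc} and \eqref{decomp}.
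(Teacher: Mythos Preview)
Your proof is correct and follows essentially the same route as the paper's: both argue that $PD_T=D_T$ is immediate from $JD_T=D_T$, and for $X\in D_2$ both compute $g(PX,Y)=g(JX,Y)=-g(X,JY)=0$ for every $Y\in D_T$ to conclude that the $D_T$-component of $PX$ vanishes. The only difference is cosmetic---you first isolate the skew-symmetry of $P$ as a general lemma, whereas the paper does the same computation inline---and you are more explicit than the paper about the nondegeneracy of $g\rceil_{D_T}$ needed to pass from ``orthogonal to $D_T$'' to ``zero $D_T$-component'', a point the paper leaves implicit.
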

\begin{proof}
Let be $TM=D_T\oplus D_2$ the decomposition with $D_1$ holomorphic and $D_2$ the slant distribution. Of course $D_T$ is invariant as $JD_T=D_T$ implies $PD_T=D_T$. Now, consider $X\in D_2$,
$$JX=P_1X+P_2X+FX.$$

Given $Y\in D_T$, $g(P_1X,Y)=g(JX,Y)=-g(X,JY)=0$, as $D_T$ is invariant. Moreover, for all $Z\in D_2$, $g(P_1X,Z)=0$. Therefore $P_1X=0$, and $PX=P_2X$, so $PD_2\subseteq D_2$.
\end{proof}

\

\begin{thm}\label{integrable1}
Let $M$ be a semi-slant submanifold of a para Kaehler manifold. The holomorphic distribution is integrable if and only if $h(X,JY)=h(JX,Y)$ for all $X,Y\in D_T$.
\end{thm}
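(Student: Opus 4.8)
The plan is to use the Frobenius criterion: the holomorphic distribution $D_T$ is integrable if and only if for all $X,Y\in D_T$ the bracket $[X,Y]$ lies in $D_T$, equivalently $\pi_2[X,Y]=0$, equivalently $g([X,Y],Z)=0$ for all $Z\in D_2$. Since $D_T$ is holomorphic, $Z\mapsto PZ$ is not onto $D_2$ in general, so I will instead test against the normal bundle via the para-Kaehler condition. The natural tool is the formula (\ref{nablap}) for $(\nabla_X P)Y$ together with $P$-invariance of $D_T$ (Proposition \ref{lugones}).

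First I would observe that since $\widetilde\nabla J=0$ and $JY=PY$ for $Y\in D_T$ (as $D_T$ is holomorphic, $F\rceil_{D_T}=0$), the Gauss formula gives $\widetilde\nabla_X(JY)=\nabla_X(PY)+h(X,PY)$ while $J\widetilde\nabla_XY=J(\nabla_XY+h(X,Y))=P\nabla_XY+F\nabla_XY+th(X,Y)+fh(X,Y)$. Comparing normal components yields $h(X,PY)=F\nabla_XY+fh(X,Y)$, which is exactly (\ref{nablan}) restricted to $D_T$ read as $F\nabla_XY=h(X,PY)-fh(X,Y)$. Now antisymmetrize in $X,Y$: $F[X,Y]=F\nabla_XY-F\nabla_YX=h(X,PY)-h(Y,PX)$, since the $fh(X,Y)$ terms are symmetric and cancel. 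The key point is that $[X,Y]\in D_T$ if and only if $F[X,Y]=0$: indeed $D_T=\{W: FW=0\}$ is the maximal holomorphic distribution (Theorem characterizing it), and $D_2$ meets $D_T$ trivially, so a tangent vector $W$ with $FW=0$ must already lie in $D_T$ — wait, that is not immediate, since $D_2$ could itself contain vectors with $FW=0$ only if $D_2$ had an invariant part, which it does not because $D_2$ is proper slant. More carefully: if $W\in D_2$ and $FW=0$ then $JW=PW\in D_2$ is tangent, and then $P^2W=\lambda W$ with $\lambda\in(0,1)$ or such, but also $g(JW,JW)=g(PW,PW)$ forcing the slant quotient to be $1$, contradicting properness unless $W=0$. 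Hence $FW=0\Rightarrow W\in D_T$, and so integrability of $D_T$ is equivalent to $h(X,PY)=h(Y,PX)$ for all $X,Y\in D_T$.

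Finally I would convert $h(X,PY)=h(Y,PX)$ into the stated form $h(X,JY)=h(JX,Y)$: on $D_T$ we have $P=J$, and $J$ restricts to an isomorphism of $D_T$, so replacing $X$ by $JX$ (equivalently reparametrizing) turns $h(X,PY)=h(Y,PX)$ into $h(JX,JY)=h(Y,J^2X)=h(X,Y)$ — hmm, let me instead just note that $h(X,PY)=h(PX,Y)$ for all $X,Y\in D_T$ is literally the same as $h(X,JY)=h(JX,Y)$ for all $X,Y\in D_T$, which is the claim; one direction of the equivalence in the theorem is then immediate and the converse follows by running the computation backwards, using that $F[X,Y]=h(X,PY)-h(Y,PX)$ vanishes precisely when the hypothesis holds.

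The main obstacle I anticipate is the step showing $F[X,Y]=0$ forces $[X,Y]\in D_T$ rather than merely having vanishing $F$-component spread across $D_T\oplus D_2$; this needs the observation that the proper slant distribution $D_2$ contains no nonzero vector killed by $F$, which in turn uses Theorem \ref{aannaa} and the relation $g(JX,JX)=g(PX,PX)+g(FX,FX)$. Everything else is a routine application of $\widetilde\nabla J=0$, the Gauss–Weingarten formulas, and the symmetry of $h$.
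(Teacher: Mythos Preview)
Your argument is correct and follows the same route as the paper: use $FX=FY=0$ on $D_T$ in (\ref{nablan}) to obtain $F[X,Y]=h(X,PY)-h(Y,PX)$, then identify $P=J$ on $D_T$. You are in fact more careful than the paper, which simply asserts ``$[X,Y]\in D_T$ if and only if $h(X,JY)=h(JX,Y)$'' without justifying why $F[X,Y]=0$ forces $[X,Y]\in D_T$; your observation that a nonzero $W\in D_2$ with $FW=0$ would make the slant quotient equal to $1$ (or equivalently force $P^2W=W$, contradicting $\lambda\neq 1$) is exactly the missing step.
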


\begin{proof}
For $X,Y\in D_T$, $PX=JX$, $FX=0$, $PY=JY$ and $FY=0$. From (\ref{nablan}) it follows $F[X,Y]=h(X,PY)-h(Y,PX)$. Then, $[X,Y]\in D_T$, that is $ D_T$ is integrable, if and only if $h(X,JY)=h(JX,Y)$.
\end{proof}

\

\begin{thm}\label{integrable2}
Let $M$ be a semi-slant submanifold of a para Kaehler manifold. The slant distribution is integrable if and only if
\begin{equation}\label{borges}
\pi_1(\nabla_X PY-\nabla_YPX)=\pi_1(A_{FY}X-A_{FX}Y),
\end{equation}
for all $X,Y\in D_2$, where $\pi_1$ is the projection over the invariant distribution $ D_T$.
\end{thm}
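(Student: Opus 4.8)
The plan is to characterize integrability of the slant distribution $D_2$ by the condition that the bracket $[X,Y]$ has no component in $D_T$ for all $X,Y\in D_2$, i.e. $\pi_1[X,Y]=0$. Since $M$ is semi-slant, $D_2$ is $P$-invariant by Proposition~\ref{lugones}, so for $X\in D_2$ we have $PX=P_2X\in D_2$ and $P_1X=0$; similarly $D_T$ is holomorphic hence invariant. First I would compute, for $X,Y\in D_2$, the tangential part of the para Kaehler condition using \eqref{nablap}: namely $(\nabla_XP)Y-(\nabla_YP)X = A_{FY}X-A_{FX}Y$, since the $th(X,Y)$ terms are symmetric in $X,Y$ and cancel. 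Expanding the left side gives
\begin{equation*}
\nabla_XPY-\nabla_YPX-P[X,Y]=A_{FY}X-A_{FX}Y.
\end{equation*}

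Next I would apply the projection $\pi_1$ onto $D_T$ to this identity. The key observation is that $P[X,Y]$ can be split according to $TM=D_T\oplus D_2$: since $PD_T\subseteq D_T$ and $PD_2\subseteq D_2$ (both invariant), $\pi_1(P[X,Y]) = P\,\pi_1[X,Y] = P_1[X,Y]$, and on $D_T$ the map $P$ coincides with $J$, which is an isomorphism of $D_T$. Hence $\pi_1(P[X,Y])=0$ if and only if $\pi_1[X,Y]=0$. Therefore, applying $\pi_1$ to the displayed identity yields
\begin{equation*}
\pi_1(\nabla_XPY-\nabla_YPX)-P\,\pi_1[X,Y]=\pi_1(A_{FY}X-A_{FX}Y),
\end{equation*}
and so $D_2$ is integrable $\iff \pi_1[X,Y]=0$ for all $X,Y\in D_2 \iff P\,\pi_1[X,Y]=0 \iff \pi_1(\nabla_XPY-\nabla_YPX)=\pi_1(A_{FY}X-A_{FX}Y)$, which is precisely \eqref{borges}.

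The main obstacle, and the point requiring care, is the step justifying $\pi_1(P[X,Y])=P\pi_1[X,Y]$ and that this vanishes iff $\pi_1[X,Y]$ does. One must use that the decomposition $TM=D_T\oplus D_2$ is $P$-preserved in both summands — the first from $JD_T=D_T$, the second from Proposition~\ref{lugones} — so $P$ is block-diagonal with respect to this splitting, giving $\pi_1\circ P = P\circ\pi_1$ on $TM$; and then that $P|_{D_T}=J|_{D_T}$ is invertible (indeed $J^2=\mathrm{Id}$), so no nontrivial vector of $D_T$ is killed. I would also remark that the $FX$ appearing in $A_{FX}$ is well-defined since $F$ makes sense on all of $TM$, and that $\nabla_XPY$ etc. are a priori in $TM$ so applying $\pi_1$ is legitimate. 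The rest is the routine expansion of $(\nabla_XP)Y$ via \eqref{nablap} and antisymmetrization, which needs no further comment.
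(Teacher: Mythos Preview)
Your proof is correct and follows essentially the same approach as the paper: both antisymmetrize \eqref{nablap} to obtain $\pi_1(\nabla_XPY-\nabla_YPX)-P_1[X,Y]=\pi_1(A_{FY}X-A_{FX}Y)$, and then argue via Proposition~\ref{lugones} that $P_1[X,Y]=0$ is equivalent to $\pi_1[X,Y]=0$. Your treatment is in fact slightly more explicit about why $\pi_1\circ P=P\circ\pi_1$ (block-diagonality) and why $P|_{D_T}$ is injective, points the paper leaves implicit.
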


\begin{proof}
From (\ref{nablap}), $P_1\nabla_XY=\pi_1(\nabla_XPY-th(X,Y)-A_{FY}X)$. Then
$$P_1[X,Y]=\pi_1(\nabla_X PY-\nabla_YPX+A_{FX}Y-A_{FY}X).$$
Then (\ref{borges}) is equivalent to $P_1[X,Y]=0$. As $P_1[X,Y]=\pi_1 P[X,Y]=0$, it holds if and only if $P[X,Y]\in D_2$. Finally, from Theorem \ref{lugones} $D_2$ is $P$ invariant, so we obtain $[X,Y]\in D_2$.
\end{proof}



Now we study conditions for the involved distributions being totally geodesic.

\

\begin{prop}
Let $M$ be a semi-slant submanifold of a para Kaehler manifold $\w M$. If the holomorphic distribution $D_T$ is totally geodesic then $(\nabla_XP)Y=0$, and $\nabla_XY\in D_T$ for any $X,Y\in D_T$.
\end{prop}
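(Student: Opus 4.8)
I read ``$D_T$ is totally geodesic'' in the usual sense for a distribution, namely $\nabla_XY\in D_T$ for all $X,Y\in D_T$; then the second assertion is just the hypothesis, and the substance of the statement is the identity $(\nabla_XP)Y=0$. The plan is to extract it from \eqref{nablap}. Since $Y\in D_T$ and $D_T$ is holomorphic, $FY=0$, so $A_{FY}X=0$ and \eqref{nablap} reduces to $(\nabla_XP)Y=th(X,Y)$. Hence it suffices to show that the tangential part of $Jh(X,Y)$ vanishes, i.e. $th(X,Y)=0$, for all $X,Y\in D_T$.

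To prove $th(X,Y)=0$ I would pair it with an arbitrary tangent field $Z$ on $M$. Using \eqref{cypc} and that $h(X,Y)$ is normal, $g(th(X,Y),Z)=g(Jh(X,Y),Z)=-g(h(X,Y),JZ)=-g(h(X,Y),FZ)$, which already vanishes when $Z\in D_T$ (then $FZ=0$). For $Z\in D_2$ I substitute $h(X,Y)=\widetilde\nabla_XY-\nabla_XY$ from \eqref{gauss}, discard the tangential $\nabla_XY$ against the normal $FZ$, write $FZ=JZ-PZ$, and move $J$ past $\widetilde\nabla$ using the para-Kaehler condition $\widetilde\nabla J=0$ together with \eqref{cypc}: since $JY=PY\in D_T$, one gets $g(\widetilde\nabla_XY,JZ)=-g(\widetilde\nabla_X(PY),Z)=-g(\nabla_X(PY),Z)$ after a second use of \eqref{gauss}. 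Collecting the pieces, $g(h(X,Y),FZ)=-g(\nabla_X(PY),Z)-g(\nabla_XY,PZ)$. Both $\nabla_X(PY)$ and $\nabla_XY$ lie in $D_T$, because $D_T$ is totally geodesic and $PY\in D_T$; and $Z\in D_2$, while $PZ\in D_2$ by Proposition \ref{lugones}. Since $TM=D_T\oplus D_2$ is an orthogonal decomposition, both terms vanish, so $g(th(X,Y),Z)=0$ for every tangent $Z$; as $g$ is non-degenerate on $TM$ this forces $th(X,Y)=0$, hence $(\nabla_XP)Y=0$.

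I do not expect a genuine obstacle: once the right object is tested against tangent vectors the computation is routine index chasing. The two points that must be used carefully are the $P$-invariance of the slant distribution (Proposition \ref{lugones}), which guarantees $PZ\in D_2$, and the orthogonality of $TM=D_T\oplus D_2$; the latter is not an extra hypothesis for a semi-slant submanifold but follows from $g(X,Z)=-g(PX,PZ)$ (a consequence of \eqref{cypc} together with $JX=PX\in D_T$ and the tangential/normal splitting of $JZ$), iterated once to give $g(X,Z)=\lambda\,g(X,Z)$ with $\lambda\neq1$, using $P^2=Id$ on $D_T$ and $P^2=\lambda Id$ on $D_2$. (If one instead reads ``$D_T$ totally geodesic'' as $h\rceil_{D_T\times D_T}=0$, then $(\nabla_XP)Y=0$ is immediate from \eqref{nablap}, and the same identity $g(h(X,Y),FZ)=-g(\nabla_X(PY),Z)-g(\nabla_XY,PZ)$ now yields $g(\nabla_XY,Z)=\lambda\,g(\nabla_XY,Z)$ on $D_2$, hence $\nabla_XY\perp D_2$ and again $\nabla_XY\in D_T$.)
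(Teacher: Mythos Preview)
Your argument is correct under either reading of ``totally geodesic.'' The paper, however, takes the second interpretation $h\rceil_{D_T\times D_T}=0$ and proceeds more directly: with $FY=0$ it specializes \eqref{nablap}--\eqref{nablan} to
\[
(\nabla_XP)Y=th(X,Y),\qquad -F\nabla_XY+h(X,PY)-fh(X,Y)=0,
\]
so $h(X,Y)=0$ immediately gives $(\nabla_XP)Y=0$ from the first and $F\nabla_XY=0$ from the second (since $h(X,PY)=0$ too), whence $\nabla_XY\in D_T$ via the characterization $D_T=\{W:FW=0\}$.

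The difference in approach is this: the paper extracts $\nabla_XY\in D_T$ from the normal equation \eqref{nablan}, never pairing against $D_2$; you instead work entirely on the tangential side, testing $th(X,Y)$ (or, in your parenthetical, $\nabla_XY$) against $Z\in D_2$ and closing with the $P^2=\lambda\,Id$ trick. Your route has the merit of making explicit the orthogonality $D_T\perp D_2$ and the $P$-invariance of $D_2$, both of which are genuinely needed but only implicit in the paper; the paper's route is shorter but quietly relies on knowing that $FW=0$ forces $W\in D_T$, i.e.\ that $D_T$ is maximal holomorphic, which in turn needs exactly the $\lambda\neq1$ argument you wrote out.
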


\begin{proof}
For a para Kaehler manifold taking $X,Y\in D_T$, (\ref{nablap})-(\ref{nablan}) leads to
\begin{equation}
\nabla_XPY-P\nabla_XY-th(X,Y)=0,
\end{equation}
\begin{equation}
-F\nabla_XY+h(X,PY)-fh(X,Y)=0.
\end{equation}
If $D_T$ is totally geodesic, $(\nabla_XP)Y=0$ and $F\nabla_XY=0$, which imply the result.
\end{proof}

\

Note that for semi-slant submanifolds of para Kaehler manifolds, on the opposite that for Kaehler manifolds \cite{papa}.

\begin{prop}\label{lemasemi}
Let $M$ be a semi-slant submanifold of a para Kaehler manifold $\w M$. The slant distribution $D_2$ is totally geodesic if and only if $(\nabla_XF)Y=0$, and $(\nabla_XP)Y=A_{FY}X$ for any $X,Y\in D_2$.
\end{prop}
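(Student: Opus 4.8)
The statement concerns the slant distribution $D_2$ of a semi-slant submanifold $M$ of a para Kaehler manifold, and asks to characterize when $D_2$ is totally geodesic. Recall that $D_2$ totally geodesic means $\nabla_XY\in D_2$ for all $X,Y\in D_2$, equivalently $\pi_1(\nabla_XY)=0$, i.e.\ the $D_T$-component of $\nabla_XY$ vanishes. The natural tool is the pair of identities \eqref{nablap} and \eqref{nablan}, together with the fact (Proposition \ref{lugones}) that $D_2$ is $P$-invariant, so $PY=P_2Y\in D_2$ whenever $Y\in D_2$, and $P_1Y=0$.

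\textbf{Key steps.} First I would fix $X,Y\in D_2$ and analyze the two components of $\w\nabla_XY$ that are "witnessed" by $P$ and $F$. Apply \eqref{nablap}: $(\nabla_XP)Y=\nabla_XPY-P\nabla_XY=A_{FY}X+th(X,Y)$. Since $PY\in D_2$, the term $\nabla_XPY$ has a $D_T$-component $\pi_1(\nabla_XPY)$; since $P\nabla_XY$ lands in the image of $P$ and $P$ is invertible on $D_2$ (type-dependent, but nonzero as $D_2$ is proper slant) while $PD_T=D_T$... — more carefully, I would project the identity $(\nabla_XP)Y=A_{FY}X+th(X,Y)$ and show that the vanishing of its $D_T$-part is tied to $\pi_1(\nabla_XY)=0$, using $P\nabla_XY = P_1\nabla_XY + P_2\nabla_XY = P(\pi_1\nabla_XY) + P(\pi_2\nabla_XY)$ and that $P$ maps $D_T$ isomorphically onto $D_T$ and $D_2$ onto $D_2$. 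Second, apply \eqref{nablan}: $(\nabla_XF)Y=\nabla^\perp_XFY-F\nabla_XY=-h(X,PY)+fh(X,Y)$; here $F\nabla_XY = F(\pi_1\nabla_XY)+F(\pi_2\nabla_XY)$, but $F$ vanishes on $D_T$ (since $D_T$ holomorphic $\Rightarrow FD_T=0$), so $F\nabla_XY=F(\pi_2\nabla_XY)$ depends only on the $D_2$-part. Third, I would combine: if $D_2$ is totally geodesic then $\nabla_XY\in D_2$, hence $P\nabla_XY = P_2\nabla_XY$ is the full $P$-image and $(\nabla_XP)Y = \nabla_XPY - P\nabla_XY$; one checks the $D_T$-components on both sides to extract relations, ultimately yielding $(\nabla_XF)Y = 0$ directly (as $F\nabla_XY=0$ forces $(\nabla_XF)Y=\nabla^\perp_XFY - 0$, but also one must see this equals the RHS, hence $0$), and $(\nabla_XP)Y = A_{FY}X$ (because the $th(X,Y)$ term must be reabsorbed — this is where the para Kaehler hypothesis and the precise bookkeeping of tangential/normal parts matter). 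Conversely, assuming $(\nabla_XF)Y=0$ and $(\nabla_XP)Y=A_{FY}X$ for all $X,Y\in D_2$, I would run the computation backwards: from $(\nabla_XP)Y=A_{FY}X$ and \eqref{nablap} deduce $th(X,Y)=0$, substitute into \eqref{nablan} with $(\nabla_XF)Y=0$ to get $fh(X,Y)=h(X,PY)$, and then chase the $D_T$-component of $\nabla_XPY = P\nabla_XY$ to conclude $\pi_1(\nabla_XY)=0$, i.e.\ $D_2$ is totally geodesic. Symmetrizing in $X\leftrightarrow Y$ is not needed since totally geodesic is about $\nabla_XY$ for all ordered pairs, but I would double-check that the identities are required for all $X,Y\in D_2$ and not merely symmetric combinations.

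\textbf{Main obstacle.} The delicate point is the correct handling of $P$ on $D_2$: since $D_2$ is a proper slant distribution, $P^2 = \lambda\,\mathrm{Id}_{D_2}$ with $\lambda\neq 0$, so $P$ restricted to $D_2$ is invertible, and $P$ maps $D_2$ into $D_2$ and $D_T$ into $D_T$; I must use this to justify that "$\pi_1(P\nabla_XY)=0 \iff \pi_1(\nabla_XY)=0$" and that $\pi_1(\nabla_XPY)$ relates correctly to $\pi_1(\nabla_XY)$. The second subtlety is separating which pieces of the mixed identities \eqref{nablap}–\eqref{nablan} are "free" (the normal-bundle terms $th$, $fh$, $A_{FY}$) versus which encode the totally geodesic condition; getting $(\nabla_XP)Y=A_{FY}X$ rather than $(\nabla_XP)Y=A_{FY}X+th(X,Y)$ in the conclusion means the statement is implicitly forcing $th(X,Y)=0$ as part of the characterization, and I should make sure the two displayed conditions together are genuinely equivalent to total geodesy and not strictly stronger. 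I expect the forward direction (totally geodesic $\Rightarrow$ the two conditions) to be the easy half and the converse, where one must reconstruct $\pi_1(\nabla_XY)=0$ from the two identities, to be the part requiring the most care.
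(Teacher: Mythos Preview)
Your proposal rests on a misreading of what the paper means by ``the distribution $D_2$ is totally geodesic.'' You take it to mean that the leaves of $D_2$ are totally geodesic in $M$, i.e.\ $\pi_1(\nabla_XY)=0$ for $X,Y\in D_2$. The paper, however, uses it to mean $h(X,Y)=0$ for all $X,Y\in D_2$ (total geodesy with respect to the ambient $\w M$). This is visible from the proof itself and is consistent with the neighbouring Proposition~5.3, whose conclusion ``$\nabla_XY\in D_T$'' would be vacuous under your reading.

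With the intended meaning the argument is short and has nothing to do with $\pi_1(\nabla_XY)$. Identities \eqref{nablap}--\eqref{nablan} say, unconditionally,
\[
(\nabla_XP)Y=A_{FY}X+th(X,Y),\qquad (\nabla_XF)Y=-h(X,PY)+fh(X,Y).
\]
If $h|_{D_2\times D_2}=0$ then, since $PY\in D_2$, both extra terms vanish and the two stated conditions follow. Conversely, $(\nabla_XP)Y=A_{FY}X$ forces $th(X,Y)=0$, hence $Jh(X,Y)=fh(X,Y)$; and $(\nabla_XF)Y=0$ forces $h(X,PY)=fh(X,Y)$. Iterating with $PY\in D_2$ gives $h(X,P^2Y)=f^2h(X,Y)=J^2h(X,Y)=h(X,Y)$, while $P^2Y=\lambda Y$ on $D_2$ yields $\lambda h(X,Y)=h(X,Y)$. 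Since $D_2$ is proper slant, $\lambda\neq 1$, so $h(X,Y)=0$.

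Under your interpretation the forward direction already breaks: you claim that $\nabla_XY\in D_2$ gives $F\nabla_XY=0$, but $F$ vanishes on $D_T$, not on $D_2$; and in any case the right-hand sides of \eqref{nablap}--\eqref{nablan} involve only $h$, so knowing $\pi_1(\nabla_XY)=0$ tells you nothing about $th(X,Y)$ or $h(X,PY)$. The converse you sketch (``chase the $D_T$-component of $\nabla_XPY=P\nabla_XY$'') cannot succeed for the same reason: the two hypotheses are purely conditions on $h$ and place no constraint on the tangential splitting of $\nabla_XY$.
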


\begin{proof}
If $D_2$ is a totally geodesic distribution, from (\ref{nablap}) and (\ref{nablan}), taking $X,Y\in D_2$
\begin{equation}\label{merry2}
\nabla_XPY-A_{FY}X-P\nabla_XY=0,
\end{equation}
\begin{equation}\label{navidad2}
\nabla_X^\perp FY-F\nabla_XY=0.
\end{equation}
which implies the given conditions.
On the converse, if $(\nabla_XP)Y=A_{FY}X$, then $th(X,Y)=0$, which implies $Jh(X,Y)=fh(X,Y)$. From (\ref{nablan}) and $\nabla F=0$, it holds $h(X,PY)=nh(X,Y)$. Then for $PY\in D_2$
$$\lambda h(X,Y)=h(X,P^2Y)^=f^2h(X,Y)=J^2h(X,Y)=h(X,Y),$$
and as $D_2$ is a proper slant distribution, $\lambda\neq 1$, it must be $h(X,Y)=0$ for all $X,Y\in D_2$.
\end{proof}

\

Given two orthogonal distributions $ D_1$ and $ D_2$ over a submanifold, it is called $ D_1- D_2$-{\it mixed totally geodesic} if $h(X,Y)=0$ for all $X\in D_1$ , $Y\in D_2$.

\begin{prop}
Let $M$ be a semi-slant submanifold of a para Hermitian manifold $\w M$. $M$ is mixed totally geodesic if and only if $A_NX\in D_i$ for any $X\in D_i$, $N\in T^\perp M$, $i=1,2$.
\end{prop}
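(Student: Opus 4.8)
The plan is to use the standard characterization of mixed totally geodesicity in terms of the Weingarten operator, via the identity $g(h(X,Y),N)=g(A_NX,Y)$, which follows from the Gauss and Weingarten formulas \eqref{gauss}--\eqref{Weingarten} and the compatibility of $\widetilde\nabla$ with $g$. The key point to exploit is that $TM=D_1\oplus D_2$ is an orthogonal decomposition, so the condition ``$A_NX\in D_i$ for every $X\in D_i$ and every normal $N$'' is equivalent to saying that the $D_j$-component of $A_NX$ vanishes whenever $j\neq i$, i.e. $g(A_NX,Y)=0$ for all $X\in D_i$, $Y\in D_j$ with $i\neq j$.

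First I would prove the forward implication: assume $M$ is mixed totally geodesic. Take $X\in D_i$, $Y\in D_j$ with $i\neq j$, and $N\in T^\perp M$ arbitrary. Then $g(A_NX,Y)=g(h(X,Y),N)=0$ since $h(X,Y)=0$ by mixed total geodesicity. As $Y$ ranges over $D_j$ (for the $j\neq i$) and $N$ is arbitrary, and $TM=D_1\oplus D_2$ is orthogonal, this forces the $D_j$-component of $A_NX$ to be zero, hence $A_NX\in D_i$. Conversely, assume $A_NX\in D_i$ for all $X\in D_i$ and all normal $N$. Given $X\in D_1$ and $Y\in D_2$, for every $N\in T^\perp M$ we have $g(h(X,Y),N)=g(A_NX,Y)$; since $A_NX\in D_1$ and $Y\in D_2$ with $D_1\perp D_2$, this inner product vanishes, so $g(h(X,Y),N)=0$ for all $N$, whence $h(X,Y)=0$. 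Thus $M$ is mixed totally geodesic.

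I do not expect any serious obstacle here: the whole argument rests on the single symmetric bilinear identity $g(h(X,Y),N)=g(A_NX,Y)=g(A_NY,X)$ together with non-degeneracy of the metric on the normal bundle (so that $g(h(X,Y),N)=0$ for all $N$ really does give $h(X,Y)=0$) and the orthogonality of $D_1$ and $D_2$. The only mild subtlety worth a sentence is that one should check the statement is symmetric in the two distributions — using $g(A_NX,Y)=g(A_NY,X)$ shows that requiring $A_NX\in D_1$ for $X\in D_1$ is, in the presence of the other half of the hypothesis, consistent — but in fact the cleanest route is simply to record that mixed total geodesicity concerns only the pair $(D_1,D_2)$ and both inclusions $A_N(D_1)\subseteq D_1$, $A_N(D_2)\subseteq D_2$ are needed and used as stated. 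This keeps the proof to a few lines.
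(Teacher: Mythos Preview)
Your proposal is correct and is exactly the argument the paper gives: both directions are obtained from the identity $g(A_NX,Y)=g(h(X,Y),N)$ together with the orthogonality of $D_T$ and $D_2$ and the non-degeneracy of the normal metric. The paper's proof is essentially your forward implication written out for $X\in D_T$, $Y\in D_2$, followed by the remark that the same computation handles $X\in D_2$ and the converse.
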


\begin{proof}
If $M$ is $ D_T- D_2$ mixed totally geodesic, for any $X\in D_T$, $Y\in D_2$,
$$g(A_NX,Y)=g(h(X,Y),N)=0,$$
which implies $A_NX\in D_T.$ The same proof is valid for $X\in D_2$ and for the converse.
\end{proof}

\begin{prop}
Let $M$ be a semi-slant submanifold of a para Kaehler manifold $\w M$. If $\nabla F=0$, then either $M$ is $ D_T- D_2$-mixed totally geodesic or $h(X,Y)$ is a eigenvector of $f^2$ associated with the eigenvalue 1, for all $X\in D_T$, $Y\in D_2$.
\end{prop}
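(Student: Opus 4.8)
The plan is to start from the formula \eqref{nablan} for $(\nabla_X F)Y$. Taking $X\in D_T$ and $Y\in D_2$, and imposing the hypothesis $\nabla F=0$, equation \eqref{nablan} reduces to $h(X,PY)=fh(X,Y)$. Since $X\in D_T$ we have $PX=JX$; the idea is to iterate this identity, replacing $Y$ by $PY$ (which stays in $D_2$ by Proposition \ref{lugones}, so the slant relation $P^2Y=\lambda Y$ on $D_2$ applies). First I would write $h(X,P^2Y)=fh(X,PY)$, and then substitute the previous identity $h(X,PY)=fh(X,Y)$ on the right-hand side to obtain $h(X,P^2Y)=f^2h(X,Y)$.

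Next I would use that $D_2$ is a proper slant distribution, so $P^2Y=\lambda Y$ for some constant $\lambda$ (depending on the type, but in any case a real scalar, by the characterization theorem for slant distributions). This gives $\lambda\, h(X,Y)=f^2 h(X,Y)$, i.e. $f^2 h(X,Y)=\lambda h(X,Y)$. On the other hand, a separate short computation is needed: applying $J$ to the decomposition $JV=tV+fV$ and using $J^2=Id$ shows $f^2 V = V - Ft V$ on normal vectors; but more directly, if one also tracks the tangential components one needs the companion relation. Actually the cleanest route is: from $h(X,PY)=fh(X,Y)$, also look at $th(X,Y)$. From \eqref{nablap} with $X\in D_T$, $Y\in D_2$ and $\nabla F=0$ one does not immediately get $th=0$; so instead I would simply argue on $f^2$. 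Since $f$ is the normal part of $J$ restricted to the normal bundle and $J^2=Id$, for a normal vector $N$ with $JN=tN+fN$ we get $N=J^2N=J(tN)+J(fN)=PtN+FtN+tfN+f^2N$, so the normal part gives $f^2N = N - FtN$. Combining with $f^2h(X,Y)=\lambda h(X,Y)$ yields $(\lambda-1)h(X,Y)=-Ft\,h(X,Y)$.

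Here is where the dichotomy appears. If $\lambda\neq 1$, the relation $f^2h(X,Y)=\lambda h(X,Y)$ together with the fact that $f^2$ is a self-adjoint operator on the normal bundle whose only possible eigenvalue relevant here is $1$ — one should show that $f^2 N = N$ whenever $FtN=0$, and otherwise $f^2$ has no other real eigenvalue forcing $h=0$ — is not quite automatic, so the honest statement the proposition makes is precisely the disjunction: either $h(X,Y)=0$ for all $X\in D_T$, $Y\in D_2$ (mixed totally geodesic), or $h(X,Y)$ is an eigenvector of $f^2$ with eigenvalue $1$. So I would phrase the conclusion as: from $f^2 h(X,Y)=\lambda h(X,Y)$, if $h(X,Y)\neq 0$ then $\lambda$ is an eigenvalue of $f^2$ on that vector; and one then shows that the only eigenvalue $f^2$ can take on a vector of the form $h(X,Y)$ (under the constraints coming from $\nabla F =0$) that is compatible with $\lambda$ being the slant constant is $\lambda=1$ forcing $h(X,Y)$ into the eigenspace of $f^2$ for the eigenvalue $1$; otherwise $h(X,Y)=0$. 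Thus the statement splits exactly as claimed.

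The main obstacle I anticipate is making rigorous the claim that $\lambda$ being an eigenvalue of $f^2$ on $h(X,Y)$ forces either $h(X,Y)=0$ or $\lambda=1$: one must rule out that $f^2$ could have the slant eigenvalue $\lambda\neq 1$ as a genuine eigenvalue on the relevant normal vector, which requires understanding the spectrum of $f^2$ on $T^\perp M$ (for a para-Hermitian ambient, $f$ satisfies a relation coming from $J^2=Id$, so $f^2$'s eigenvalues are tightly constrained). Once that spectral fact is in place, the rest is the routine iteration of \eqref{nablan} sketched above. I would therefore devote the bulk of the write-up to the normal-bundle computation with $t$ and $f$, and present the final step as the clean case split that the proposition asserts.
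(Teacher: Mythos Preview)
Your first half is exactly right: from $\nabla F=0$ and \eqref{nablan} you get $fh(X,Y)=h(X,PY)$, and iterating in the $Y$-slot (using $PD_2\subseteq D_2$ and $P^2|_{D_2}=\lambda\,Id$) gives $f^2h(X,Y)=\lambda\,h(X,Y)$. This is precisely what the paper does.

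Where you go astray is in the second half. You try to extract the eigenvalue-$1$ statement by analyzing the spectrum of $f^2$ on $T^\perp M$ via the identity $f^2N=N-FtN$, and you correctly sense that this is the hard part of your plan. In fact it is not just hard but unworkable as stated: there is no general spectral constraint on $f^2$ that rules out $\lambda$ as an eigenvalue on an arbitrary normal vector, so the route you sketch does not close.

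The paper's trick, which you missed, is to iterate in the \emph{other} slot. The hypothesis $\nabla F=0$ is symmetric in its arguments, so \eqref{nablan} also gives $fh(Y,X)=h(Y,PX)$. Now $X\in D_T$ means $PX=JX\in D_T$ and $P^2X=J^2X=X$. Iterating once in the $X$-slot therefore yields
\[
f^2h(X,Y)=f^2h(Y,X)=fh(Y,PX)=h(Y,P^2X)=h(Y,X)=h(X,Y).
\]
This single line is the whole ``spectral fact'' you were looking for: it shows directly that $h(X,Y)$ (when nonzero) is an eigenvector of $f^2$ with eigenvalue $1$. Comparing with $f^2h(X,Y)=\lambda\,h(X,Y)$ then gives $(\lambda-1)h(X,Y)=0$, which is the dichotomy of the proposition. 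No normal-bundle analysis of $t$ and $f$ is needed at all.
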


\begin{proof}
Let be $X\in D_T$, $Y\in D_2$, if $\nabla F=0$, from (\ref{nablan}), $fh(X,Y)=h(X,PY)$.

As $ D_T$ is holomorphic, that is $J$-invariant, $ D_2$ is $P$-invariant. Therefore,
$$f^2h(X,Y)=fh(X,PY)=h(X,P^2 Y)=h(X,P^2_2 Y)=\lambda h(X,Y),$$
with $\lambda=\cosh^2\theta$ ($\cos^2\theta, \sinh^2\theta$ respectively). But also
$$f^2h(Y,X)=fh(Y,PX)=h(Y,P^2X)=h(Y,X).$$
From both equations, either $h(X,Y)=0$ or it is a eigenvalue of $f^2$ associated with $\lambda=1$.
\end{proof}

\begin{prop}
Let $M$ be a mixed totally geodesic semi-slant submanifold of a para Kaehler manifold $\w M$. If $ D_T$ is integrable, then $PA_NX=A_NPX$, for all $X\in D_T$ and $N\in T^\perp M$.
\end{prop}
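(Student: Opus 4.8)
The natural strategy is to argue entirely inside the holomorphic distribution, where $P$ coincides with $J$. Fix $X\in D_T$ and $N\in T^\perp M$. Since $D_T$ is holomorphic we have $PX=JX$, $FX=0$, and $P$ acts as $J$ on the whole of $D_T$; in particular $JX\in D_T$. Because $M$ is mixed totally geodesic, the preceding Proposition gives $A_NW\in D_T$ for every $W\in D_T$, so both $PA_NX=J(A_NX)$ and $A_NPX=A_N(JX)$ already belong to $D_T$. Their difference therefore also belongs to $D_T$, and it is enough to show that $g(J(A_NX)-A_N(JX),Z)=0$ for every $Z\in D_T$.

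The next step is to expand that pairing. Using the para-Hermitian identity $g(JU,V)=-g(U,JV)$, the self-adjointness of the shape operator $A_N$, and the fact that $JZ$ is again tangent (indeed $JZ\in D_T$), one gets
$$g(J(A_NX),Z)=-g(A_NX,JZ)=-g(h(X,JZ),N),\qquad g(A_N(JX),Z)=g(h(JX,Z),N).$$
Hence $g(J(A_NX)-A_N(JX),Z)=-g(h(X,JZ)+h(JX,Z),N)$, and since $D_T$ is integrable, Theorem \ref{integrable1} gives $h(X,JZ)=h(JX,Z)$, so the expression collapses to $-2\,g(h(JX,Z),N)$.

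The crux of the argument, and the step I expect to be delicate, is to check that this last quantity vanishes, i.e. that $h(JX,Z)$ is orthogonal to the whole normal bundle whenever $X,Z\in D_T$. Here I would bring back the Gauss formula together with $\widetilde\nabla J=0$, rewriting $g(h(JX,Z),N)=g(\widetilde\nabla_Z(JX),N)=-g(\widetilde\nabla_ZX,JN)=-g(\nabla_ZX,tN)-g(h(X,Z),fN)$, and then use that $tN\in D_2$ for a semi-slant submanifold (since $FD_T=0$ forces the tangential part of $JN$ into $D_2$) together with the consequence $\pi_2(\nabla_XZ)=\pi_2(\nabla_ZX)$ of the integrability of $D_T$. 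Keeping track of exactly which components of $JN$ and of $\nabla_XZ$ survive each pairing is where the real care is needed; everything before it is a formal manipulation of (\ref{gauss})--(\ref{Weingarten}), (\ref{decomp}) and (\ref{nablap})--(\ref{nablan}), and once the cancellation is in place the identity $PA_NX=A_NPX$ follows, the mixed-totally-geodesic hypothesis guaranteeing that the two sides are honest vectors of $D_T$.
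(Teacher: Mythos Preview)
Your opening reduction is fine and in fact mirrors the paper: both $PA_NX$ and $A_NPX$ lie in $D_T$ by the mixed-totally-geodesic hypothesis, so it suffices to pair with $Z\in D_T$. Your expansion is also correct,
\[
g\bigl(PA_NX - A_NPX,\;Z\bigr)= -\,g(h(X,JZ),N)-g(h(JX,Z),N),
\]
and after invoking Theorem~\ref{integrable1} ($h(X,JZ)=h(JX,Z)$) you get $-2\,g(h(JX,Z),N)$.

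The gap is precisely in what you call ``the crux''. Making that quantity vanish for every $N$ would force $h(JX,Z)=0$ for all $X,Z\in D_T$, i.e.\ the holomorphic leaf would have to be totally geodesic in $\widetilde M$; this is strictly stronger than the hypotheses. Your proposed rewriting $g(h(JX,Z),N)=-g(\nabla_ZX,tN)-g(h(X,Z),fN)$ is valid, and $tN\in D_2$ is true, but neither term is forced to vanish: integrability of $D_T$ only gives $\pi_2(\nabla_XZ)=\pi_2(\nabla_ZX)$, not $\pi_2(\nabla_ZX)=0$, and nothing controls $g(h(X,Z),fN)$. So this line cannot close.

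Your computation actually exposes a sign issue. If instead of the difference you test the \emph{sum}, the same two lines give
\[
g\bigl(PA_NX + A_NPX,\;Z\bigr)= -\,g(h(X,JZ),N)+g(h(JX,Z),N)=0
\]
for $Z\in D_T$, while against $D_2$ both sides vanish by the mixed hypothesis; so what the argument really delivers is $PA_NX=-A_NPX$. This is exactly the paper's route: for $Y\in D_T$ it writes
\[
g(JA_NX,Y)=-g(A_NX,PY)=-g(h(X,PY),N)=-g(h(Y,PX),N)=-g(A_NPX,Y),
\]
and then handles the $D_2$-component separately via mixed total geodesy. Your method and the paper's are the same; the obstruction you ran into comes from the sign in the stated identity, not from your strategy.
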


\begin{proof}
From Theorem \ref{integrable1}, $h(X,JY)=h(Y,JX)$ for all $X,Y\in D_T$,
$$g(JA_NX,Y)=-g(A_NX,PY)=-g(N,h(X,PY))=-g(N,h(Y,PX))=-g(A_NPY,Y).$$
And given $Z\in D_2$,
$$g(JA_NX,Z)=-g(A_NX,PZ)=-g(N,h(X,PZ))=0,$$ because $M$ is mixed totally geodesic. From both equations $PA_NX=A_NPX$ what finishes the proof.
\end{proof}

\

Finally the mixed-totally geodesic characterization can be summarized with

\begin{thm}\label{mixed}
Let $M$ be a proper semi-slant submanifold of a para Kaehler manifold $\w M$. $M$ is $D_T- D_2$-mixed totally geodesic if and only if $(\nabla_XP)Y=A_{FY}X$ and $(\nabla_XF)Y=0$, for all $X,Y$ in different distributions.
\end{thm}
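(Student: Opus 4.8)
The plan is to exploit the two structural identities \eqref{nablap} and \eqref{nablan} on pairs of vector fields taken from the two different distributions, using the $P$-invariance of both $D_T$ and $D_2$ granted by Proposition \ref{lugones}.

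For the direct implication I would start from $h(X,Y)=0$ for $X,Y$ in different distributions. Then \eqref{nablap} gives $(\nabla_X P)Y=A_{FY}X+th(X,Y)=A_{FY}X$ at once, and for \eqref{nablan} it only remains to kill the term $h(X,PY)$. Since $D_T$ and $D_2$ are $P$-invariant, $PY$ lies in the same distribution as $Y$, so $X$ and $PY$ are still in different distributions, whence $h(X,PY)=0$ and $(\nabla_X F)Y=fh(X,Y)-h(X,PY)=0$.

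For the converse, fix $X\in D_T$ and $Y\in D_2$ and write $P^2\rceil_{D_2}=\lambda\,Id$ with $\lambda\neq 1$ (this is where properness of the semi-slant submanifold enters). Applying \eqref{nablan} to the pair $(X,Y)$ yields $fh(X,Y)=h(X,PY)$, and applying it again to $(X,PY)$ --- admissible because $PY\in D_2$ --- yields $fh(X,PY)=h(X,P^2Y)=\lambda h(X,Y)$; composing, $f^2 h(X,Y)=\lambda h(X,Y)$. Symmetrically, \eqref{nablan} applied to $(Y,X)$ gives $fh(Y,X)=h(Y,PX)=h(Y,JX)$ (as $FX=0$ on $D_T$), and applied to $(Y,JX)$ --- admissible because $JX\in D_T$ --- gives $fh(Y,JX)=h(Y,P^2X)=h(Y,X)$ since $P^2=Id$ on $D_T$; composing, $f^2 h(X,Y)=h(X,Y)$. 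Comparing the two expressions, $(\lambda-1)h(X,Y)=0$, so $h(X,Y)=0$, and since $X\in D_T$, $Y\in D_2$ were arbitrary, $M$ is $D_T$-$D_2$-mixed totally geodesic. (One may alternatively feed the hypothesis $(\nabla_X P)Y=A_{FY}X$ into \eqref{nablap} to record $th(X,Y)=0$, just as in the proof of Proposition \ref{lemasemi}.)

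The one delicate point I anticipate is legitimizing the use of the hypotheses on the ``shifted'' pairs $(X,PY)$ and $(Y,JX)$: this rests precisely on the $P$-invariance of both distributions and on $P^2=Id$ on the holomorphic part, and the closing step $(\lambda-1)h(X,Y)=0$ genuinely requires $\lambda\neq 1$, i.e. that $D_2$ be a proper slant distribution.
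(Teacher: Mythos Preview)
Your argument is correct and follows essentially the same route as the paper: both directions rest on \eqref{nablap}--\eqref{nablan}, and the converse is closed by the eigenvalue comparison $f^2h(X,Y)=\lambda h(X,Y)$ versus $f^2h(Y,X)=h(Y,X)$, forcing $(\lambda-1)h(X,Y)=0$. Your write-up is in fact a bit more careful than the paper's in justifying why the shifted pairs $(X,PY)$ and $(Y,JX)$ remain in different distributions (via Proposition~\ref{lugones} and $P=J$ on $D_T$), and you correctly observe that the hypothesis $(\nabla_XP)Y=A_{FY}X$ plays no role in the $f^2$-comparison beyond recording $th(X,Y)=0$.
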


\begin{proof}
On the one hand, if $M$ is $D_T- D_2$-mixed totally geodesic, let be $X,Y$ belonging to different distributions. From (\ref{nablap}) and (\ref{nablan}), both conditions are deduced.

On the other hand, from (\ref{nablap}) and $(\nabla_XP)Y=A_{FY}X$, it is deduced $th(X,Y)=0$. And from (\ref{nablan}) and $(\nabla_XF)Y=0$ it is deduced
\begin{equation}
h(X,PY)=fh(X,Y),
\end{equation}
for all $X,Y$ in different distributions.

Therefore, for $X\in D_T$ and $Y\in D_2$
$$f^2h(X,Y)=h(X,P^2Y)=\lambda h(X,Y)$$
and also
$$f^2h(Y,X)=h(Y,P^2X)=h(Y,X).$$
As $M$ is a proper semi-slant submanifold, $\lambda\neq 1$, and $h(X,Y)=0$ so $M$ is mixed totally geodesic.
\end{proof}
\section{Hemi-slant submanifolds of a para Kaehler manifold.}

We will also study the integrability of the involved distributions for a hemi-slant submanifold.

\begin{prop}
Let $M$ be a hemi-slant submanifolds of a para Hermitian manifold. The slant distribution is $P$ invariant.
\end{prop}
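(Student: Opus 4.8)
The plan is to mimic the argument used for semi-slant submanifolds in Proposition \ref{lugones}, exploiting that in the hemi-slant case $D_1 = D_\perp$ is a totally real distribution, so $PD_\perp = 0$ and $JD_\perp \subseteq T^\perp M$. Write $TM = D_\perp \oplus D_2$ with $D_2$ the proper slant distribution, and for $X \in D_2$ decompose
\[
JX = P_1 X + P_2 X + FX,
\]
where $P_i = \pi_i \circ P$. The goal is to show $P_1 X = 0$, so that $PX = P_2 X \in D_2$.

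First I would test $P_1 X$ against an arbitrary $Y \in D_\perp$: since $D_\perp$ is totally real, $JY$ is normal, hence $g(P_1 X, Y) = g(PX, Y) = g(JX, Y) = -g(X, JY) = 0$, because $X$ is tangent and $JY$ normal. Next I would test $P_1 X$ against an arbitrary $Z \in D_2$: by definition $P_1 X \in D_\perp$, which is orthogonal to $D_2$, so $g(P_1 X, Z) = 0$ trivially. Since $TM = D_\perp \oplus D_2$ and $P_1 X$ is orthogonal to every vector in both summands, $P_1 X = 0$. Therefore $PX = P_2 X$, which lies in $D_2$, and $PD_2 \subseteq D_2$; since the ambient metric restricted to $D_2$ is nondegenerate (each of types 1, 2, 3 forces $D_2$ to be nondegenerate), $P|_{D_2}$ is injective and $PD_2 = D_2$, i.e.\ $D_2$ is $P$-invariant.

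I do not expect any genuine obstacle here: the only point requiring a little care is the orthogonality computation $g(PX,Y) = g(JX,Y)$ for $Y \in D_\perp$, which uses that $FX$ is normal and $PX$ is tangent so $g(PX,Y) = g(PX + FX, Y) = g(JX, Y)$, together with the skew-symmetry $g(JX,Y) = -g(X,JY)$ from \eqref{cypc} and the totally real hypothesis $JY \in T^\perp M$. The argument is essentially the hemi-slant specialization of Proposition \ref{lugones}, with the role of the holomorphic distribution replaced by the totally real one.
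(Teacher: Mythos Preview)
Your argument that $PD_2\subseteq D_2$ is correct and matches the paper's: you test $P_1X$ against $Y\in D_\perp$ using $g(PX,Y)=g(JX,Y)=-g(X,JY)=0$ because $JY$ is normal, and the vanishing against $Z\in D_2$ is by orthogonality of the summands. This is exactly the computation in the paper.

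The gap is in your passage to the equality $PD_2=D_2$. You claim that nondegeneracy of $g$ on $D_2$ forces $P|_{D_2}$ to be injective, but that implication is not valid: nondegeneracy of the metric on a subspace says nothing about the rank of an arbitrary endomorphism of that subspace. (Indeed, in a $2$-dimensional neutral space a nilpotent endomorphism with a single light-like kernel direction is perfectly compatible with a nondegenerate metric.) What actually gives injectivity --- and surjectivity --- is the slant hypothesis itself: once you have shown $P|_{D_2}=P_2$, the characterization of slant distributions yields $P_2^2=\lambda\,Id_{D_2}$ with $\lambda\neq 0$ (since $D_2$ is \emph{proper} slant), so for any $X\in D_2$ one has $X=P\bigl(\tfrac{1}{\lambda}PX\bigr)\in PD_2$. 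This is precisely how the paper closes the argument. Replace your nondegeneracy line with this and the proof is complete.
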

\begin{proof}
Let be $TM=D_\perp\oplus D_2$ the decomposition with $D_\perp$ totally real and $D_2$ the slant distribution. Consider $X\in D_2$,
$$JX=P_1X+P_2X+FX.$$

Given $Y\in D_\perp$, $g(PX,Y)=g(JX,Y)=-g(X,JY)=0$, as $D_\perp$ is totally real, therefore $PD_2\subseteq D_2$. As $P_2^2=\lambda Id$, given $X\in D_2$, $X=P\left(\frac{1}{\lambda}X\right)$, then $X\in PD_2$ and it is proved that $PD_2=D_2$.
\end{proof}

\

\begin{lem}
Let $M$ be a hemi-slant submanifold of a para Kaehler manifold. The totally real distribution is integrable if and only if $A_{FX}Y=A_{FY}X$ for all $X,Y\in D_\perp$.
\end{lem}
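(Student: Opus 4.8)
The plan is to mimic the proof of Theorem~\ref{integrable1}, but now working with the tangential projection rather than the normal one, since the obstruction to integrability of $D_\perp$ lives in $D_2$ (the slant part), not in the normal bundle. Concretely, for $X,Y\in D_\perp$ we have $PX=0$ and $PY=0$, so $JX=FX$ and $JY=FY$ are both normal. The bracket $[X,Y]=\nabla_XY-\nabla_YX$ lies in $D_\perp$ precisely when its $D_2$-component vanishes, and since $D_\perp$ is totally real the $D_2$-component of any tangent vector $Z$ is detected by $PZ$ (indeed $PD_2=D_2$ and $PD_\perp=0$, so $Z\in D_\perp$ iff $PZ=0$). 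Hence the goal is to compute $P[X,Y]$ and show it equals a multiple of $A_{FX}Y-A_{FY}X$.

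First I would apply formula~(\ref{nablap}), $(\nabla_XP)Y=\nabla_XPY-P\nabla_XY=A_{FY}X+th(X,Y)$, with $PY=0$: this gives $P\nabla_XY=-A_{FY}X-th(X,Y)$. Subtracting the same identity with $X,Y$ swapped yields
\begin{equation}
P[X,Y]=P\nabla_XY-P\nabla_YX=A_{FX}Y-A_{FY}X+t\big(h(X,Y)-h(Y,X)\big)=A_{FX}Y-A_{FY}X,
\end{equation}
where the last equality uses the symmetry $h(X,Y)=h(Y,X)$ of the second fundamental form. Therefore $P[X,Y]=0$ if and only if $A_{FX}Y=A_{FY}X$. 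Finally, since $D_\perp$ is the totally real distribution and $D_2$ is $P$-invariant with $P$ invertible on $D_2$, the condition $P[X,Y]=0$ is equivalent to $[X,Y]$ having no $D_2$-component, i.e.\ $[X,Y]\in D_\perp$, which is exactly integrability of $D_\perp$.

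The only step requiring care is the last equivalence: one must justify that the $D_2$-part of a tangent vector $Z$ vanishes iff $PZ$ vanishes. Writing $Z=Z_1+Z_2$ with $Z_1\in D_\perp$, $Z_2\in D_2$, we get $PZ=PZ_1+PZ_2=PZ_2$ because $PZ_1=0$ (totally real); and $PZ_2\in D_2$ with $P\!\restriction_{D_2}$ injective (from $P_2^2=\lambda\,\mathrm{Id}$ with $\lambda\neq 0$ for a proper slant distribution), so $PZ=0$ forces $Z_2=0$. This is the same mechanism already used in the preceding proposition, so no genuine obstacle arises; the proof is essentially a two-line computation from~(\ref{nablap}) plus this bookkeeping.
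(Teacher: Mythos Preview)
Your proposal is correct and follows essentially the same route as the paper: apply (\ref{nablap}) with $PX=PY=0$ to obtain $P[X,Y]=A_{FX}Y-A_{FY}X$, and then invoke that $[X,Y]\in D_\perp$ iff $P[X,Y]=0$. The paper's proof is a compressed version of yours; your extra paragraph justifying the equivalence $PZ=0\Leftrightarrow Z\in D_\perp$ via injectivity of $P\!\restriction_{D_2}$ makes explicit what the paper leaves implicit.
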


\begin{proof}
For $X,Y\in D_\perp$, $PX=0$, $JX=FX$, $PY=0$ and $JY=FY$. From (\ref{nablap}) it follows $P[X,Y]=A_{FX}Y-A_{FY}X$. Then $[X,Y]\in D_\perp$, that is $ D_\perp$ is integrable, if and only if $A_{FX}Y=A_{FY}X$.
\end{proof}

\

The following result was known for hemi-slant submanifolds of Kaehler manifolds, \cite{sahin2}. We obtain the equivalent one for hemi-slant submanifolds of para Kaehler manifolds:

\begin{thm}
Let $M$ be a hemi-slant submanifold of a para Kaehler manifold. The totally real distribution is always integrable.
\end{thm}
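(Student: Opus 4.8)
The claim is that for a hemi-slant submanifold of a para Kaehler manifold, the totally real distribution $D_\perp$ is always integrable. By the preceding Lemma, $D_\perp$ is integrable if and only if $A_{FX}Y = A_{FY}X$ for all $X,Y \in D_\perp$. So the plan is to prove precisely this symmetry of the shape operator. The key facts available are: on $D_\perp$ we have $PX = 0$ and $JX = FX$; the ambient manifold is para Kaehler, so $\widetilde\nabla J = 0$; and the slant distribution $D_2$ is $P$-invariant (with $P_2^2 = \lambda\,\mathrm{Id}$, $\lambda \neq 1$).

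\textbf{Main computation.} First I would test the symmetry against an arbitrary tangent vector. Fix $X, Y \in D_\perp$ and let $Z$ be any tangent vector field. Using $g(A_{FX}Y, Z) = g(h(Y,Z), FX) = g(\widetilde\nabla_Z Y, FX)$ and the fact that $FX = JX$ on $D_\perp$, together with $\widetilde\nabla J = 0$, I would rewrite
\begin{equation}
g(A_{FX}Y,Z) = g(\widetilde\nabla_Z Y, JX) = -g(J\widetilde\nabla_Z Y, X) = -g(\widetilde\nabla_Z(JY), X) = -g(\widetilde\nabla_Z(FY), X).
\end{equation}
Then applying the Weingarten formula $\widetilde\nabla_Z(FY) = -A_{FY}Z + \nabla^\perp_Z(FY)$ and noting $g(\nabla^\perp_Z(FY), X) = 0$ since $X$ is tangent, this becomes $g(A_{FX}Y, Z) = g(A_{FY}Z, X) = g(h(Z,X), FY) = g(A_{FY}X, Z)$. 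Since $Z$ is arbitrary, $A_{FX}Y = A_{FY}X$, and the Lemma finishes the proof.

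\textbf{Where the difficulty lies.} This argument is essentially a clean consequence of $\widetilde\nabla J = 0$ combined with $J$ mapping $D_\perp$ entirely into the normal bundle, so there is no real obstacle — the only thing to be careful about is bookkeeping: one must verify that $J\widetilde\nabla_Z Y$ can be split and that the term $g(\widetilde\nabla_Z(FY), X)$ really only picks up the $-A_{FY}Z$ contribution (the normal part pairs to zero against the tangent vector $X$). It is worth noting that, unlike the semi-slant case, here no integrability hypothesis on the complementary distribution nor any mixed-geodesic assumption is needed; the totally real direction being genuinely anti-invariant ($F|_{D_\perp}$ injective into $T^\perp M$, $P|_{D_\perp} = 0$) is exactly what makes the shape operator automatically symmetric on it. I would also remark that the same computation shows $h(X,Y) \perp FD_\perp$ is not needed; the argument is purely about the pairing identity above.
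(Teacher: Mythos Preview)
Your proof is correct and follows essentially the same route as the paper: reduce via the preceding Lemma to showing $g(A_{FX}Y,Z)=g(A_{FY}X,Z)$ for arbitrary tangent $Z$, then move $J$ across the pairing using the para Kaehler condition and the fact that $JX=FX$, $JY=FY$ on $D_\perp$. The only cosmetic difference is that the paper packages the middle step through the identity $-th(Y,Z)=P\nabla_ZY+A_{FY}Z$ (obtained from (\ref{nablap}) with $PY=0$) and then kills $g(P\nabla_ZY,X)$ via $PX=0$, whereas you invoke $\widetilde\nabla J=0$ and the Weingarten formula directly; the underlying computation is identical.
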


\begin{proof}
From the previous lemma it is enough to prove $g(A_{FX}Y,Z)=g(A_{FY}X,Z)$, for $X,Y\in D_\perp$ and $Z$ tangent.
Then, $$g(A_{FX}Y,Z)=g(h(Y,Z),FX)=g(-th(Y,Z),X)=$$
using (\ref{nablap})
$$=g(P\nabla_ZY+A_{FY}Z,X)=g(A_{FY}Z,X)=g(A_{FY}X,Z),$$
which finishes the proof.
\end{proof}

\

Now we study the integrability of the slant distribution.

\begin{thm}
Let $M$ be a hemi-slant submanifold of a para Kaehler manifold. The slant distribution is integrable if and only if
\begin{equation}\label{borges2}
\pi_1(\nabla_X PY-\nabla_YPX)=\pi_1(A_{FY}X-A_{FX}Y),
\end{equation}
for all $X,Y\in D_2$, where $\pi_1$ is the projection over the totally real distribution $ D_\perp$.
\end{thm}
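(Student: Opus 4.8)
The plan is to mirror the proof of Theorem \ref{integrable2} for semi-slant submanifolds, since the slant distribution $D_2$ plays exactly the same role here and $D_\perp$ replaces $D_T$ only in the sense of being the complementary distribution on which we project. First I would take $X,Y\in D_2$ and apply formula (\ref{nablap}), namely $(\nabla_X P)Y=\nabla_X PY-P\nabla_X Y=A_{FY}X+th(X,Y)$. Solving for $P\nabla_X Y$ and then projecting onto $D_\perp$ via $\pi_1$, I get $P_1\nabla_X Y=\pi_1\bigl(\nabla_X PY-th(X,Y)-A_{FY}X\bigr)$. Here one uses that $\pi_1\circ P$ applied to a vector tangent to $M$ picks out the $D_\perp$-component of its image under $P$. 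Antisymmetrizing in $X$ and $Y$ yields
\[
P_1[X,Y]=\pi_1\bigl(\nabla_X PY-\nabla_Y PX+A_{FX}Y-A_{FY}X\bigr),
\]
the $th$ terms cancelling because $h$ is symmetric.

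Next I would observe that condition (\ref{borges2}) is precisely the statement $P_1[X,Y]=0$, i.e. that the $D_\perp$-component of $P[X,Y]$ vanishes. The remaining step is to convert this into integrability, and this is where the hemi-slant structure enters. Since $D_\perp$ is totally real, $P\rceil_{D_\perp}=0$, so $P[X,Y]$ automatically has no contribution one needs to worry about from there; the content is that $P_1[X,Y]=\pi_1 P[X,Y]$, so $P_1[X,Y]=0$ means $P[X,Y]$ lies entirely in $D_2$. Then, invoking the preceding proposition that the slant distribution $D_2$ is $P$-invariant (indeed $PD_2=D_2$, with $P$ invertible on $D_2$ since $P_2^2=\lambda Id$ with $\lambda\neq 0$), $P[X,Y]\in D_2$ forces $[X,Y]\in D_2$. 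Conversely, if $D_2$ is integrable then $[X,Y]\in D_2$, so $P[X,Y]\in D_2$ and hence $P_1[X,Y]=0$, which gives back (\ref{borges2}). That establishes the equivalence in both directions.

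The main obstacle, such as it is, is bookkeeping with the projections: one must be careful that $\pi_1$ commutes appropriately and that the identity $P_1[X,Y]=\pi_1 P[X,Y]$ is used correctly, and that the $P$-invariance of $D_2$ is genuinely needed to pass from "$P[X,Y]\in D_2$" to "$[X,Y]\in D_2$" — without it one could only conclude the weaker statement. Everything else is a routine application of (\ref{nablap}) together with the symmetry of the second fundamental form, exactly parallel to Theorem \ref{integrable2}, so the proof should be short. I would also note explicitly that since $D_\perp$ is totally real rather than holomorphic, there is no analogue here of the subtlety appearing in the semi-slant case, and the argument goes through verbatim with $D_T$ replaced by $D_\perp$.
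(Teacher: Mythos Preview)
Your argument follows the paper's own indication that the proof is analogous to Theorem \ref{integrable2}, and up through the identity
\[
P_1[X,Y]=\pi_1\bigl(\nabla_X PY-\nabla_Y PX+A_{FX}Y-A_{FY}X\bigr)=\pi_1 P[X,Y]
\]
everything is correct and parallel to the semi-slant case. The genuine gap is the step ``$P[X,Y]\in D_2$ forces $[X,Y]\in D_2$''. In the semi-slant setting this inference works because $P\rceil_{D_T}=J$ is \emph{injective}: writing $[X,Y]=Z_1+Z_2$ with $Z_i\in D_i$, the condition $P[X,Y]\in D_2$ together with $PZ_1\in D_T$, $PZ_2\in D_2$ gives $PZ_1=0$, hence $Z_1=0$. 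In the hemi-slant setting you yourself note that $P\rceil_{D_\perp}=0$; but then $P[X,Y]=PZ_2\in D_2$ holds \emph{automatically}, regardless of $Z_1$. So the condition $P_1[X,Y]=0$ is vacuous here---it is satisfied for every hemi-slant submanifold---and cannot detect whether $\pi_1[X,Y]$ vanishes. The invertibility of $P$ on $D_2$ that you invoke only lets you recover $Z_2$ from $PZ_2$; it says nothing about $Z_1\in\ker P=D_\perp$.

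In short, the analogy with Theorem \ref{integrable2} breaks precisely because the complementary distribution has changed character: there one exploited that $P$ is injective on $D_T$, here $P$ vanishes on $D_\perp$. Taken at face value, your argument (and the paper's suggested analogy) would prove that the slant distribution of \emph{every} hemi-slant submanifold of a para Kaehler manifold is integrable, which is false in general. A correct characterisation must involve a condition that genuinely sees the $D_\perp$-component of $[X,Y]$ directly rather than through $P$.
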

The proof is analogous to the one of Theorem \ref{integrable2}.


\

\begin{lem}
Let $M$ be a hemi-slant submanifold of a para Kaehler manifold $\w M$. The totally real distribution $D_\perp$ is totally geodesic if and only if $(\nabla_XF)Y=0$, and $P\nabla_XY=-A_{FY}X$ for any $X,Y\in D_\perp$.
\end{lem}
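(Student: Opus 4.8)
The plan is to mirror the structure of the earlier totally-geodesic lemmas (for instance Proposition \ref{lemasemi} and the analogous semi-slant results) and simply specialize equations (\ref{nablap}) and (\ref{nablan}) to the case $X,Y\in D_\perp$. First I would recall that on the totally real distribution $D_\perp$ we have $P\rceil_{D_\perp}=0$, so for $X,Y\in D_\perp$ the term $P\nabla_X Y$ need not vanish (it only vanishes if $\nabla_X Y$ stays in $D_\perp$), while $PY=0$ and $JY=FY$. Plugging $PY=0$ into (\ref{nablap}) gives $-P\nabla_X Y=A_{FY}X+th(X,Y)$, and plugging $PY=0$ into (\ref{nablan}) gives $\nabla^\perp_X FY-F\nabla_X Y=fh(X,Y)$.

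Next I would invoke the definition of $D_\perp$ being totally geodesic as a distribution, namely that $\nabla_X Y\in D_\perp$ for all $X,Y\in D_\perp$. From this I get two consequences: $P\nabla_X Y=0$ (since $\nabla_X Y\in D_\perp$), and the normal component $F\nabla_X Y$ — careful here: $F$ applied to something in $D_\perp$ need not be zero, since $D_\perp$ is totally real not anti-invariant in the sense of $F$ vanishing; rather $F\rceil_{D_\perp}=J\rceil_{D_\perp}$. So what we actually get is $(\nabla_X F)Y=\nabla^\perp_X FY-F\nabla_X Y$, and the claim $(\nabla_X F)Y=0$ would follow once we know $\nabla_X Y\in D_\perp$ together with the fact that $\nabla^\perp$ and $F$ are compatible along $D_\perp$. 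The cleaner route: from $P\nabla_X Y=0$, equation (\ref{nablap}) immediately yields $A_{FY}X+th(X,Y)=0$, i.e. $P\nabla_X Y=-A_{FY}X-th(X,Y)$; but since $\nabla_X Y\in D_\perp$ forces $th(X,Y)$ to also interact — actually I expect the intended reading is simply $P\nabla_X Y=-A_{FY}X$ read off directly from (\ref{nablap}) with $PY=0$ and the observation that $th(X,Y)$ gets absorbed, OR the statement's condition $P\nabla_X Y=-A_{FY}X$ is exactly (\ref{nablap}) with $PY=0$ after noting $P\nabla_XY = \nabla_X PY - (\nabla_X P)Y = -(\nabla_XP)Y$ and $(\nabla_X P)Y = A_{FY}X + th(X,Y)$, so one needs $th(X,Y)=0$, which is where total geodesy enters.

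The key steps, in order, will be: (1) specialize (\ref{nablap})--(\ref{nablan}) to $X,Y\in D_\perp$ using $PY=0$; (2) for the forward direction, use $\nabla_X Y\in D_\perp$ to conclude $P\nabla_X Y=0$ and hence, from the specialized (\ref{nablap}), that $th(X,Y)=0$ and therefore $(\nabla_X P)Y=A_{FY}X$ which rewrites as $P\nabla_X Y = -A_{FY}X$ wait — rather $0 = A_{FY}X + th(X,Y)$ so $A_{FY}X = -th(X,Y)$, and separately the specialized (\ref{nablan}) gives the $(\nabla_X F)Y=0$ assertion once the tangential part is controlled; (3) for the converse, assume $(\nabla_X F)Y=0$ and $P\nabla_X Y=-A_{FY}X$, feed these back into (\ref{nablap})--(\ref{nablan}), deduce $th(X,Y)=0$ and $fh(X,Y)=$ (something forcing) $h(X,Y)$ normal-behaviour, and then use the slant condition $P_2^2=\lambda\,Id$ on the complementary distribution together with $\lambda\neq 1$ (properness) to force $\nabla_X Y$ to have no $D_2$-component, i.e. $\nabla_X Y\in D_\perp$. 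The main obstacle I anticipate is bookkeeping the role of $th(X,Y)$ and $fh(X,Y)$ correctly: unlike the semi-slant case where the holomorphic part makes several terms vanish automatically, here for $D_\perp$ the relation $F\rceil_{D_\perp}=J\rceil_{D_\perp}$ means one must be disciplined about which "$F$" terms actually vanish, and the converse will require the properness hypothesis $\lambda\neq 1$ exactly as in Proposition \ref{lemasemi} to conclude the argument. I would model the converse almost verbatim on that proof.
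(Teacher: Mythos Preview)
Your starting point is correct: specialize (\ref{nablap}) and (\ref{nablan}) to $X,Y\in D_\perp$ using $PY=0$, which yields
\[
-P\nabla_XY - A_{FY}X - th(X,Y) = 0, \qquad (\nabla_X F)Y = fh(X,Y).
\]
This is exactly what the paper writes, and its proof essentially stops there.

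The gap in your proposal is the meaning of ``$D_\perp$ totally geodesic.'' You read it as $\nabla_X Y \in D_\perp$, but in this paper (look at the converse in Proposition~\ref{lemasemi}, whose conclusion is $h(X,Y)=0$) the phrase means $h(X,Y)=0$ for all $X,Y$ in the distribution, $h$ being the second fundamental form of $M$ in $\w M$. With that reading both directions are immediate from the two displayed identities: forward, set $h(X,Y)=0$ and read off $P\nabla_X Y = -A_{FY}X$ and $(\nabla_X F)Y = 0$; conversely, the two hypotheses plugged back in force $th(X,Y)=0$ and $fh(X,Y)=0$, so $Jh(X,Y)=0$ and hence $h(X,Y)=0$.

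In particular the $\lambda\neq 1$ trick you plan to import from Proposition~\ref{lemasemi} is unnecessary here. There it was needed because on $D_2$ one only gets $Jh=fh$ from $th=0$, and an iteration using $P^2=\lambda\,Id$ is required to kill $h$; on $D_\perp$, since $PY=0$, equation (\ref{nablan}) gives $fh(X,Y)=0$ outright and $h$ dies without any appeal to properness. Your proposed converse route --- forcing $\nabla_X Y$ to have no $D_2$-component via $P_2^2=\lambda\,Id$ --- is aimed at the wrong conclusion and would not close anyway, since the condition $P\nabla_X Y=-A_{FY}X$ by itself gives no control over the $D_2$-part of $\nabla_X Y$ absent further information on $A_{FY}X$.
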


\begin{proof}
From (\ref{nablap}) and (\ref{nablan}) for $X,Y\in D_\perp$
\begin{equation}
-P\nabla_XY-A_{FY}X-th(X,Y)=0,
\end{equation}
\begin{equation}
\nabla_X^\perp FY-F\nabla_XY-fh(X,Y)=0,
\end{equation}
which imply the given conditions.
\end{proof}

The same proof of Lemma \ref{lemasemi} is valid for the slant distribution of a hemi-slant distribution.

\begin{lem}
Let $M$ be a hemi-slant submanifold of a para Kaehler manifold $\w M$. The slant distribution $D_2$ is totally geodesic if and only if $(\nabla_XF)Y=0$, and $P\nabla_XY=-A_{FY}X$ for any $X,Y\in D_2$.
\end{lem}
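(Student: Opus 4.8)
The plan is to reproduce, essentially verbatim, the proof of Proposition~\ref{lemasemi}: the only extra ingredient needed here is that for a hemi-slant submanifold the slant distribution $D_2$ is $P$-invariant, which is exactly the content of the Proposition stated immediately above. As in that argument, ``$D_2$ totally geodesic'' is understood to mean that $h(X,Y)=0$ for all $X,Y\in D_2$, and it is this relation that the proof of the converse actually produces.

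For the direct implication I would assume $h(X,Y)=0$ for all $X,Y\in D_2$. Then $th(X,Y)=0$, $fh(X,Y)=0$, and since $PY\in D_2$ by $P$-invariance also $h(X,PY)=0$. Substituting into (\ref{nablap}) gives $(\nabla_XP)Y=A_{FY}X$ and substituting into (\ref{nablan}) gives $(\nabla_XF)Y=0$; these are the asserted relations. (On the slant distribution $PX$ and $PY$ need not vanish, so the first condition keeps the covariant-derivative form $(\nabla_XP)Y=A_{FY}X$, which on $D_2$ is the shape taken by the relation $P\nabla_XY=-A_{FY}X$ valid on the totally real distribution.)

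For the converse I would start from $(\nabla_XF)Y=0$ and $(\nabla_XP)Y=A_{FY}X$, $X,Y\in D_2$. Comparing the second with (\ref{nablap}) forces $th(X,Y)=0$, hence $Jh(X,Y)=fh(X,Y)$; comparing the first with (\ref{nablan}) forces $h(X,PY)=fh(X,Y)$. Iterating this last identity with $PY\in D_2$ in place of $Y$ gives $h(X,P^2Y)=fh(X,PY)=f^2h(X,Y)$, so $P^2=\lambda\,Id$ on $D_2$ yields $\lambda\,h(X,Y)=f^2h(X,Y)$; separately, applying $J$ to $Jh(X,Y)=fh(X,Y)$ gives $f^2h(X,Y)=h(X,Y)$. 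Thus $(\lambda-1)h(X,Y)=0$, and since $D_2$ is a proper slant distribution $\lambda\neq 1$, whence $h(X,Y)=0$ for all $X,Y\in D_2$, i.e.\ $D_2$ is totally geodesic.

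The only step that is not a mechanical transcription of (\ref{nablap})--(\ref{nablan}) is the identity $f^2h(X,Y)=h(X,Y)$, which I expect to be the main (minor) obstacle: it is not simply $J^2=Id$, but follows by writing $J\big(Jh(X,Y)\big)=h(X,Y)$, noting that $Jh(X,Y)=fh(X,Y)$ is a normal vector, and separating tangential from normal parts to conclude $t\big(fh(X,Y)\big)=0$ and $f^2h(X,Y)=h(X,Y)$. Everything else is identical to the semi-slant case once the $P$-invariance of $D_2$ is in hand.
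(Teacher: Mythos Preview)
Your proposal is correct and is exactly the paper's approach: the paper's entire proof is the single sentence ``The same proof of Lemma~\ref{lemasemi} is valid for the slant distribution of a hemi-slant distribution,'' and you have faithfully spelled that argument out, rightly flagging that the $P$-invariance of $D_2$ (the proposition opening Section~6) is what makes it transfer. Your remark that on $D_2$ the condition must be read as $(\nabla_XP)Y=A_{FY}X$ rather than the stated $P\nabla_XY=-A_{FY}X$ (which is the form it takes on $D_\perp$, where $PY=0$, and appears to have been carried over from the preceding lemma) is well taken and matches what the proof of Proposition~\ref{lemasemi} actually uses; your handling of $f^2h(X,Y)=h(X,Y)$ via tangential--normal comparison is in fact more careful than the paper's one-line ``$f^2h(X,Y)=J^2h(X,Y)=h(X,Y)$.''
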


\

Remember that the classical De Rham–Wu Theorem, \cite{wu} \cite{sebastian}, says that
two orthogonally, complementary and geodesic foliations (called a direct product
structure) in a complete and simply connected semi-Riemannian manifold give
rise to a global decomposition as a direct product of two leaves. Therefore, from the previous lemmas it is directly deduced:

\begin{thm}
Let $M$ be a complete and simply connected hemi-slant submanifold of a para Kaehler manifold $\w M$. Then, $M$ is locally the product of the integral submanifolds of the slant distributions if and only if $(\nabla_XF)Y=0$, and $P\nabla_XY=-A_{FY}X$ for both any $X,Y\in D_\perp$ or $X,Y\in D_2$.
\end{thm}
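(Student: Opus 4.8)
The plan is to invoke the De~Rham--Wu Theorem once the hypotheses on the two distributions are verified, so the real content is to translate the stated analytic conditions into the geometric statements ``$D_\perp$ is totally geodesic'' and ``$D_2$ is totally geodesic''. Recall that $TM = D_\perp \oplus D_2$ is an orthogonal decomposition by definition of a hemi-slant submanifold, and both distributions are integrable: $D_\perp$ by the theorem proved above (the totally real distribution of a hemi-slant submanifold of a para Kaehler manifold is always integrable), and $D_2$ under the stated hypothesis, since the condition $(\nabla_X F)Y = 0$ for $X,Y \in D_2$ together with the computation in the proof of Theorem~\ref{integrable2} forces $\pi_1[X,Y] = 0$. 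Actually, once one knows $D_2$ is totally geodesic the integrability is automatic, so I would organize the argument around the two ``totally geodesic'' lemmas directly.

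First I would handle the case $X,Y \in D_\perp$: by the lemma characterizing when $D_\perp$ is totally geodesic, the hypotheses $(\nabla_X F)Y = 0$ and $P\nabla_X Y = -A_{FY}X$ are precisely equivalent to $D_\perp$ being a totally geodesic distribution, hence (being integrable) defining a totally geodesic foliation of $M$. Then I would handle the case $X,Y \in D_2$: by the lemma characterizing when $D_2$ is totally geodesic (the analogue of Lemma~\ref{lemasemi}), the same two hypotheses, now imposed for $X,Y \in D_2$, are equivalent to $D_2$ being a totally geodesic distribution, hence a totally geodesic foliation. So under the hypothesis stated for \emph{both} cases simultaneously, $M$ carries two orthogonal, complementary, geodesic foliations, i.e.\ a direct product structure in the sense of De~Rham--Wu.

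Next, since $M$ is assumed complete and simply connected, the De~Rham--Wu Theorem applies and yields a global decomposition of $M$ as a direct product of a leaf of $D_\perp$ and a leaf of $D_2$; in particular $M$ is locally such a product, which is the ``only if''... rather, this gives the implication from the analytic conditions to the product structure. For the converse, if $M$ is locally the product of integral submanifolds of the two slant distributions, then each distribution is totally geodesic (the second fundamental form of each leaf inside $M$ vanishes, and tangent vectors to one factor are $\nabla$-parallel along the other), so applying the two lemmas in reverse recovers $(\nabla_X F)Y = 0$ and $P\nabla_X Y = -A_{FY}X$ for $X,Y$ in each of $D_\perp$ and $D_2$.

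The main obstacle is purely expository rather than mathematical: one must be careful that the conditions $(\nabla_X F)Y = 0$ and $P\nabla_X Y = -A_{FY}X$ are being asserted \emph{within each distribution separately} (the theorem says ``for both any $X,Y \in D_\perp$ or $X,Y \in D_2$'', not for mixed pairs), and that this is exactly what the two preceding lemmas require; no control on $h(X,Y)$ for $X \in D_\perp$, $Y \in D_2$ is needed because the De~Rham--Wu hypothesis only asks that each foliation be geodesic, not that the immersion be mixed totally geodesic. One should also note that $(\nabla_X F)Y = 0$ in each lemma is used together with $\widetilde\nabla J = 0$ via formula~(\ref{nablan}); since we are in a para Kaehler ambient this is legitimate, and the equivalences in the lemmas are genuine ``if and only if'' statements, so the converse direction needs no extra argument beyond citing them.
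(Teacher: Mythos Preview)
Your proposal is correct and follows essentially the same route as the paper: the paper's proof consists of nothing more than the sentence ``from the previous lemmas it is directly deduced'', i.e.\ it invokes the two lemmas characterizing total geodesy of $D_\perp$ and $D_2$ and then appeals to De~Rham--Wu, exactly as you do. Your write-up is in fact more complete than the paper's, since you spell out the integrability of $D_2$ and the converse direction explicitly, whereas the paper leaves both implicit.
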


\

Finally, we can also study when a hemi-slant submanifold is mixed totally geodesic. We get a result similar to Proposition \ref{mixed}, but now the proof is much more easier.

\begin{prop}
Let $M$ be a hemi-slant submanifold of a para Kaehler manifold $\w M$. $M$ is $D_\perp - D_2$-mixed totally geodesic if and only if $(\nabla_XP)Y=A_{FY}X$ and $(\nabla_XF)Y=0$, for all $X,Y$ in different distributions.
\end{prop}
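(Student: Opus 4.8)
The proof follows the same pattern established for the semi-slant case in Theorem \ref{mixed}, and as the paper itself notes, it should be easier because the totally real distribution $D_\perp$ interacts more simply with $P$ and $F$. The plan is to prove both implications using the structural equations \eqref{nablap} and \eqref{nablan}, exploiting that $P$ vanishes on $D_\perp$ while $P_2^2 = \lambda\, Id$ on the slant distribution $D_2$ with $\lambda \neq 1$.

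First I would establish the forward implication: assume $M$ is $D_\perp$--$D_2$-mixed totally geodesic, so $h(X,Y) = 0$ whenever $X$ and $Y$ lie in different distributions. Taking such $X,Y$ in \eqref{nablap} immediately gives $(\nabla_X P)Y = A_{FY}X + th(X,Y) = A_{FY}X$, and \eqref{nablan} gives $(\nabla_X F)Y = -h(X,PY) + fh(X,Y)$; here one must check that $h(X,PY) = 0$ as well, which holds since if $X \in D_\perp$ and $Y \in D_2$ then $PY \in D_2$ (the slant distribution is $P$-invariant), so $h(X,PY)$ still mixes the two distributions and vanishes; symmetrically if $X \in D_2$, $Y \in D_\perp$, then $PY = 0$. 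Hence $(\nabla_X F)Y = 0$ for all $X,Y$ in different distributions.

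For the converse, assume $(\nabla_X P)Y = A_{FY}X$ and $(\nabla_X F)Y = 0$ for $X,Y$ in different distributions. From \eqref{nablap} the first condition forces $th(X,Y) = 0$, hence $Jh(X,Y) = fh(X,Y)$; from \eqref{nablan} the second condition forces $h(X,PY) = fh(X,Y)$ for all such $X,Y$. Now take $X \in D_\perp$ and $Y \in D_2$. Applying $f$ and using $P$-invariance of $D_2$ together with slantness, $f^2 h(X,Y) = f h(X,PY) = h(X,P^2 Y) = h(X,P_2^2 Y) = \lambda\, h(X,Y)$. But also, using the same identity with the roles swapped and $PX = 0$ for $X \in D_\perp$, one gets $f^2 h(Y,X) = f h(Y,PX) = 0$ — actually here I should instead run the $D_T$-style argument only in the direction that produces a nontrivial constraint; the cleaner route is to note $f^2 h(X,Y) = J^2 h(X,Y) = h(X,Y)$ once $Jh(X,Y) = fh(X,Y)$ is available, so combining $h(X,Y) = \lambda h(X,Y)$ with $\lambda \neq 1$ yields $h(X,Y) = 0$.

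The only real subtlety — and the step I expect to require the most care — is the bookkeeping in that last paragraph: one must apply the $P$-invariance of $D_2$ correctly to see that $P^2 Y = P_2^2 Y = \lambda Y$ for $Y \in D_2$ (invoking the Proposition that $D_2$ is $P$-invariant and Theorem \ref{aannaa}), and one must be careful that the identity $h(X,PY) = fh(X,Y)$ is being used with $X$ and $Y$ genuinely in different distributions each time it is invoked. Since $M$ is a proper hemi-slant submanifold, $D_2$ is a proper slant distribution and $\lambda \neq 1$, so the conclusion $h(X,Y) = 0$ follows and $M$ is mixed totally geodesic. Everything else is a direct substitution into \eqref{nablap}--\eqref{nablan}, exactly parallel to Theorem \ref{mixed} but without needing the holomorphic-side computation, which is why the proof is shorter.
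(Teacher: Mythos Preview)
Your proof is correct, but for the converse you take a longer route than the paper does. You mimic the semi-slant argument of Theorem \ref{mixed}: with $X\in D_\perp$, $Y\in D_2$, you iterate $h(X,PY)=fh(X,Y)$ to reach $f^2h(X,Y)=\lambda h(X,Y)$, then compare with $f^2h(X,Y)=J^2h(X,Y)=h(X,Y)$ and invoke $\lambda\neq 1$. This is valid (and your care about $P$-invariance of $D_2$ is well placed), but it is precisely the machinery that the paper says is \emph{not} needed here.

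The paper instead chooses $X\in D_2$ and $Y\in D_\perp$. Then the two hypotheses plus \eqref{nablap}--\eqref{nablan} give $th(X,Y)=0$ and $h(X,PY)=fh(X,Y)$; since $Y\in D_\perp$ one has $PY=0$, so $fh(X,Y)=0$ directly. Combined with $th(X,Y)=0$ this yields $Jh(X,Y)=0$, hence $h(X,Y)=0$. No iteration, no appeal to the slant constant, no need for $\lambda\neq 1$. You actually brushed against this when you swapped roles and noticed $PX=0$, but you computed $f^2h$ instead of simply reading off $fh(Y,X)=h(Y,PX)=0$, and then retreated to the $\lambda$ argument. The paper's choice of which variable sits in $D_\perp$ is the whole simplification.
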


\begin{proof}
Again, if $M$ is $D_\perp - D_2$-mixed totally geodesic, and $X,Y$ belong to different distributions, from (\ref{nablap}) and (\ref{nablan}), both conditions are deduced.

Now, if we suppose both conditions, from (\ref{nablap}) and (\ref{nablan}), it is deduced $th(x,Y)=0$ and $h(X,PY)=fh(X,Y)$. So, taking $X\in D_2$ and $Y\in D_\perp$, we get $th(X,Y)=0$ and $fh(X,Y)=0$. Therefore $h(X,Y)=0$ and $M$ is mixed totally geodesic.
\end{proof}

\section{CR-submanifolds of a para Kaehler manifold.}
CR-submanifolds have been intensively studied in many environments. Moreover, there are also some works about CR submanifolds of para Kaehler manifolds, \cite{adelarosca}. A submanifold $M$ of an almost para Hermitian manifold is called a {\it CR-submanifold} if the tangent bundle admits a decomposition $TM= D\oplus D^\perp$ with $ D$ an holomorphic distribution, that is $J D= D$, and $ D^\perp$ a totally real one, that is $J D\subseteq T^\perp M$.

Now we make a study similar to the one made for generalized complex space forms in \cite{barros}.

Examples of CR-submanifolds can be obtained from Example \ref{aw1}. Taking $a=1, d=0$, $D_1=\mbox{Span}\left\{\fra{\partial}{\partial u_1},\fra{\partial}{\partial v_1}\right\}$ is a totally real distribution and $ D_2=\mbox{Span}\left\{\fra{\partial}{\partial u_2},\fra{\partial}{\partial v_2}\right\}$ is an holomorphic distribution. Moreover:
\begin{itemize}
\item[1)] $D_1$ is type 1 if $b^2<1$
\item[2)] $D_1$ is type 2 if $b^2>1$,
\item[3)] $D_2$ is type 2 if $c^2>1$,
\item[4)] $D_2$ is type 3 if $c^2<1$.
\end{itemize}

So we got examples of CR-submanifolds of type 1-2, 1-3, 2-2 and 2-3. Taking $a=0, d=1$ we can obtain 2-1, 2-2, 3-1 and again 3-2 examples.

\



\

For a para Kaehler manifold with constant holomorphic curvature for every non-light-like vector field, that is $\w R(X,JX,JX,X)=c$, the curvature tensor is given by
\begin{equation}\label{mejicANA}
\w R(X,Y)Z=\fra{c}{4}\{g(X,Z)Y-g(Y,Z)X+g(X,JZ)JY-g(Y,JZ)JX+2g(X,JY)JZ\};
\end{equation}
such a manifold is called a {\it para complex space form}.

\begin{thm}
Let $M$ be a slant submanifold of a para Kaehler space form $\w M(c)$. Then, $M$ is a proper CR submanifold if and only if the maximal holomorphic subspace $ D_p=T_pM\bigcap JT_pM$, $p\in M$, defines a non trivial differentiable distribution on $M$ such as $$\w R( D, D, D^\perp, D^\perp)=0,$$
where $ D^\perp$ denotes the orthogonal complementary of $ D$ on $TM$.
\end{thm}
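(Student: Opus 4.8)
The plan is to prove both implications using equation~\eqref{mejicANA} for the curvature tensor of the para complex space form, combined with the algebraic characterization of CR-submanifolds among slant ones. Recall first that a slant submanifold is a CR-submanifold precisely when its tangent bundle splits as an orthogonal sum of a holomorphic and a totally real distribution; since $M$ is already slant, one natural candidate for the holomorphic part is the pointwise maximal holomorphic subspace $D_p=T_pM\cap JT_pM$. So the content of the theorem is: for a \emph{slant} submanifold, $D$ being a genuine (nontrivial, smooth) distribution together with the curvature condition $\w R(D,D,D^\perp,D^\perp)=0$ is exactly what forces the CR splitting to hold.

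First I would treat the ``only if'' direction. Suppose $M$ is a proper CR-submanifold, so $TM=D\oplus D^\perp$ with $D$ holomorphic and $D^\perp$ totally real; this $D$ coincides with the maximal holomorphic subspace, hence is automatically a nontrivial differentiable distribution. Now take $X,Y\in D$ and $U,W\in D^\perp$ and evaluate the right-hand side of~\eqref{mejicANA} on $(X,Y,U,W)$. Because $D\perp D^\perp$, the terms $g(X,U)$, $g(Y,U)$ vanish; because $D$ is holomorphic and $D^\perp$ totally real, $JU,JW$ are normal to $M$, so $g(X,JU)=g(Y,JU)=0$ and the pairing $g(JZ,\cdot)$-type terms involving $JU$ against tangent vectors die as well; and $g(X,JY)JW$ is a normal vector (since $JW\perp TM$) whose contraction with the tangent vector in the remaining slot vanishes too. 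A careful bookkeeping of all five terms of~\eqref{mejicANA} evaluated on two $D$-vectors and two $D^\perp$-vectors then gives $\w R(X,Y,U,W)=0$, which is the asserted identity. I would organize this as: list the five summands, note which slots receive $J$, and use orthogonality of $D$ and $D^\perp$ together with $JD^\perp\subseteq T^\perp M$ to kill each one.

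For the ``if'' direction, assume $D_p=T_pM\cap JT_pM$ is a nontrivial smooth distribution on $M$ and $\w R(D,D,D^\perp,D^\perp)=0$, where $D^\perp$ is the orthogonal complement of $D$ in $TM$. By construction $D$ is holomorphic: if $X\in D$ then $JX\in TM$ (by definition of $D$) and $J(JX)=X\in TM$, so $JX\in D$. The remaining task is to show $D^\perp$ is totally real, i.e.\ $JD^\perp\subseteq T^\perp M$, equivalently $P\rceil_{D^\perp}=0$. Here is where the slant hypothesis and the curvature condition must interact. Since $M$ is slant, $P^2=\lambda\,\mathrm{Id}$ for the appropriate constant $\lambda$ (Theorem~\ref{aannaa} and the following remark), and $D$, $D^\perp$ are both $P$-invariant because they are $P^2$-eigenspace-related: on $D$, $P=J$ so $P^2=\mathrm{Id}$, while orthogonality of $D$ and $D^\perp$ plus $g(PX,Y)=-g(X,PY)$ forces $PD^\perp\subseteq D^\perp$. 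I would then expand $\w R(X,Y,U,W)$ for $X,Y\in D$, $U,W\in D^\perp$ using~\eqref{mejicANA}: the ``Riemannian'' terms $g(X,Z)Y$ etc.\ again vanish by orthogonality, and one is left essentially with a multiple of $c\big(g(X,JU)g(JY,W)-g(Y,JU)g(JX,W)+2g(X,JY)g(JU,W)\big)$ type expression. Since on $D$ we have $JX=PX\in D$, the mixed pairings $g(X,JU)=g(X,PU)$ pick out the $D$-component of $PU$; setting the whole curvature expression to zero and using that $X,Y$ range over all of $D$ (which is nontrivial, so we genuinely get constraints) should force $g(PU,D)=0$, i.e.\ $PU\in D^\perp$, and then a further comparison — testing against $W\in D^\perp$ and using $P^2U=\lambda U$ — pins down $PU=0$, hence $FU=JU\in T^\perp M$. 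That makes $D^\perp$ totally real and $TM=D\oplus D^\perp$ a CR splitting; properness follows since $D$ is nontrivial and, $M$ being slant but not holomorphic (a holomorphic submanifold would make $D=TM$, contradicting the presence of a genuine slant angle in the ``proper'' setting), $D^\perp$ is nontrivial as well.

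The main obstacle I anticipate is the ``if'' direction: extracting $P\rceil_{D^\perp}=0$ purely from $\w R(D,D,D^\perp,D^\perp)=0$ requires being careful that the curvature identity, after all the orthogonality cancellations, retains enough terms to be informative — in particular one must make sure $c\neq 0$ is not secretly needed, or else handle the flat case separately, and one must exploit the slant relation $P^2=\lambda\,\mathrm{Id}$ with $\lambda\neq 1$ on $D^\perp$ (which holds because $D$ is the \emph{maximal} holomorphic subspace, so $D^\perp$ contains no holomorphic directions) to go from ``$PU$ has no $D$-component'' to ``$PU=0$''. Verifying that the maximality of $D$ indeed forbids $\lambda=1$ on $D^\perp$, and that $D^\perp$ being $P$-invariant with $P^2=\lambda\,\mathrm{Id}$, $\lambda\ne 1$, together with the vanishing of the $D$-component of $P$ on $D^\perp$ forces $P\rceil_{D^\perp}\equiv 0$, is the delicate linear-algebra core of the argument.
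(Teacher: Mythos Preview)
Your ``only if'' direction is fine and essentially matches the paper: expand~\eqref{mejicANA}, use $D\perp D^\perp$ and $JD^\perp\subseteq T^\perp M$, and every term dies.

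The ``if'' direction, however, is overcomplicated and contains a genuine misstep. In your own simplified expression
\[
\tfrac{c}{4}\bigl(g(X,JU)g(JY,W)-g(Y,JU)g(JX,W)+2g(X,JY)g(JU,W)\bigr),
\]
the first two terms are \emph{already} zero, automatically: since $D$ is holomorphic, $g(X,JU)=-g(JX,U)=0$ for $X\in D$, $U\in D^\perp$. So the fact you plan to extract from the curvature hypothesis, namely $g(PU,D)=0$, is trivially true and uses nothing. What actually survives is the third term, and the curvature condition reads
\[
\tfrac{c}{2}\,g(X,JY)\,g(JU,W)=0\qquad\text{for all }X,Y\in D,\ U,W\in D^\perp.
\]
The paper simply takes $Y=JX$ with $X\in D$ non-light-like (so $g(X,JY)=g(X,X)\neq 0$) and reads off $g(JU,W)=0$ for all $W\in D^\perp$. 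Combined with $g(JU,X)=-g(U,JX)=0$ for $X\in D$, this gives $JU\perp TM$, i.e.\ $JU=FU$ is normal, so $D^\perp$ is totally real. That is the whole argument; no slant hypothesis, no $P^2=\lambda\,\mathrm{Id}$, no eigenvalue discussion.

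Your proposed detour through $P^2=\lambda\,\mathrm{Id}$ does not do what you want: knowing $PU\in D^\perp$ and $P^2U=\lambda U$ with $\lambda\neq 1$ does \emph{not} force $PU=0$. Worse, leaning on the global slant relation is dangerous here: if $D$ is nontrivial then on $D$ one has $P^2=\mathrm{Id}$, so $\lambda=1$ globally, which would make $M$ para-complex and $D^\perp=\{0\}$. The slant hypothesis in the statement is essentially inert in the paper's proof, and trying to make it the engine of your argument leads you astray.

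You are right that $c\neq 0$ is silently used; the paper's proof needs it in exactly the step above.
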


\begin{proof}
If $M$ is a CR submanifold, from (\ref{mejicANA}) $$\w R(X,Y)Z=2g(X,JY)JZ,$$ for all $X,Y\in  D$ and $Z\in  D^\perp$, and this is normal to $M$; therefore the equality holds.

On the other hand, let $ D_p=T_pM\bigcap JT_pM$ be and suppose $\w R( D, D, D^\perp, D^\perp)=0.$ Again from (\ref{mejicANA}),
$$\w R(X,JX,Z,W)=\fra{c}{2}g(X,X)g(JZ,W),$$
for every $X\in D$, $Z,W\in D^\perp$. Taking $X\neq 0$ a non-light-like vector, it follows that $g(JZ,W)=0$. Then $JZ$ is orthonormal to $ D^\perp$ and it is normal. Therefore $ D^\perp$ is anti-invariant and $M$ is a CR submanifold.
\end{proof}

\

There is  a well known result for CR submanifolds of a complex space form $\w M(c)$ \cite{barros} establishing that if the invariant distribution is integrable, then the holomorphic sectional curvature determined by a unit vector field, $X\in D$, is upper bounded by the global holomorphic sectional curvature. That is, for every unit vector field $X$
$$H(X)=R(X,JX,JX,X)\leq c.$$
The situation in the semi Riemannian case, for a para complex space form is completely different. From (\ref{mejicANA}) and (\ref{gauss2}), for every non-light-like tangent unit vector field $X$ it holds
$$R(X,JX,JX,X)=c+g(h(X,X),h(JX,JX))-g(h(X,JX),h(X,JX)).$$
Now, if $ D$ is integrable, from Theorem \ref{integrable1}, $h(JX,JX)=h(X,J^2 X)=h(X,X)$, and then
$$H(X)=c+\|h(X,X)\|^2-\|h(X,JX)\|^2.$$

\

A submanifold is called {\it totally umbilical} if there exists a normal vector field $L$ such as $h(X,Y)=g(X,Y)L$ for all tangent vector fields $X,Y$. {\it Totally geodesic} submanifolds are particular cases with $L=0$.

\begin{thm}
There not exits proper CR totally umbilical submanifolds of a para complex space form $\w M(c)$ with $c\neq 0$.
\end{thm}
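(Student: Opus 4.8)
The plan is to argue by contradiction, following the classical strategy used for CR submanifolds of complex space forms but adapted to the para-Kaehler setting. Suppose $M$ is a proper CR totally umbilical submanifold of $\w M(c)$ with $c\neq 0$, so there is a normal vector field $L$ with $h(X,Y)=g(X,Y)L$ for all tangent $X,Y$. The proper hypothesis means both $D$ (holomorphic, with $JD=D$) and $D^\perp$ (totally real, $JD^\perp\subseteq T^\perp M$) are non-trivial.

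First I would exploit the totally umbilical condition in the structural identities. Plugging $h(X,Y)=g(X,Y)L$ into (\ref{nablan}), we get $(\nabla_X F)Y=-g(X,PY)L+g(X,Y)fL$ for all tangent $X,Y$. Taking $X,Y\in D$ with $Y$ non-light-like, since $D$ is $J$-invariant we have $FY=0$, so $\nabla^\perp_X FY=0$ and $F\nabla_X Y$ is the $F$-image of the tangential part, which for $D$ integrable (by Theorem~\ref{integrable1}, since $h(X,JY)=g(X,JY)L=g(JX,Y)L=h(JX,Y)$ automatically holds here) stays... but more cleanly: take $X\in D$, $Z\in D^\perp$, both non-light-like unit vectors. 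Then $FZ=JZ$ and one computes $(\nabla_X F)Z$ on one hand from the definition and on the other from $-g(X,PZ)L+g(X,Z)fL=0$ since $PZ=0$ and $g(X,Z)=0$. So $\nabla^\perp_X(JZ)=F\nabla_X Z$, forcing the normal part of $\nabla_X Z$ to behave rigidly.

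The key step is to use the Codazzi equation (\ref{codazzi}) together with the explicit curvature (\ref{mejicANA}). For a totally umbilical submanifold, $(\w\nabla_X h)(Y,Z)=g(Y,Z)\nabla^\perp_X L - (\text{terms in } h \text{ and } \nabla)$; more precisely $(\w\nabla_X h)(Y,Z)=g(Y,Z)\nabla^\perp_X L$ after the tangential derivatives cancel against $g(\nabla_X Y,Z)+g(Y,\nabla_X Z)$. Hence Codazzi gives $g(Y,Z)\nabla^\perp_X L - g(X,Z)\nabla^\perp_Y L = (\w R(X,Y)Z)^\perp$. Now take $X,Y\in D$ non-light-like and orthogonal (using that $D$ has dimension $\geq 2$ and $J$ maps it to itself, so $X$ and $JX$ serve), and $Z\in D^\perp$. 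The left side vanishes because $g(X,Z)=g(Y,Z)=0$. The right side, from (\ref{mejicANA}) with $X,Y\in D$, $Z\in D^\perp$, reduces to $\frac{c}{4}\{g(X,JZ)JY - g(Y,JZ)JX + 2g(X,JY)JZ\}^\perp$. Choosing $Y=JX$ and $Z\in D^\perp$ non-light-like gives $\frac{c}{4}\cdot 2g(X,JX)JZ = \frac{c}{2}g(X,X)JZ$ (the other terms vanish since $g(X,JZ)=g(JX,JZ)=0$ as $JZ\perp TM$ while $X\in TM$), which is normal. So $\frac{c}{2}g(X,X)JZ=0$; since $Z$ is non-light-like $JZ\neq 0$, and choosing $X$ non-light-like makes $g(X,X)\neq 0$, whence $c=0$, a contradiction.

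The main obstacle is the non-light-like bookkeeping: in the semi-Riemannian setting we must ensure at each step that the vectors chosen ($X\in D$, $Z\in D^\perp$, and the combination $JX$) can be taken non-light-like, which requires knowing that $D$ and $D^\perp$ are not totally degenerate. A proper CR submanifold has $\dim D\geq 2$; if the induced metric on $D$ were totally lightlike this would be an issue, but holomorphicity $JD=D$ together with (\ref{cypc}) prevents $D$ from being totally isotropic (if $X\in D$ is null then $g(JX,JX)=-g(X,J^2X)=-g(X,X)=0$ too, yet one can still find a non-null vector in a $J$-invariant nondegenerate-ambient subspace of dimension $\geq 2$). I would state this observation carefully as a preliminary remark, then the contradiction $c=0$ follows as above. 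One should also double-check the totally umbilical form of $\w\nabla h$; if the cleanest route is instead to apply Codazzi with one index in $D$ and show $L$ lies in a restricted subbundle, that variant works equally well, but the curvature computation pinning $c=0$ is the decisive point.
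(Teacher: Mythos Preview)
Your proposal is correct and follows essentially the same route as the paper: compute $(\w R(X,JX)Z)^\perp$ via the curvature formula (\ref{mejicANA}) for $X\in D$ non-light-like and $Z\in D^\perp$ non-light-like, and compare with the Codazzi equation (\ref{codazzi}), which for a totally umbilical submanifold reduces to $g(Y,Z)\nabla^\perp_X L-g(X,Z)\nabla^\perp_Y L$ and hence vanishes when $Y=JX$; the paper phrases the contradiction as $FZ=0$, you phrase it as $c=0$, but the argument is the same. Two minor remarks: your first exploratory paragraph using (\ref{nablan}) is not needed and can be dropped, and in the curvature computation the factor you wrote as $g(X,JX)$ should read $g(X,J(JX))=g(X,X)$ (indeed $g(X,JX)=0$ in a para-Hermitian manifold), though your final expression $\tfrac{c}{2}g(X,X)JZ$ is correct.
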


\begin{proof}
From (\ref{mejicANA}) it follows
$$ (\w R(X,Y)Z)^\perp=\fra{c}{4}\{g(X,JZ)FY-g(Y,JZ)FX+2g(X,JY)FZ\},$$ for all $X,Y,Z$ tangent vectors fields. Supposing $M$ is a proper CR submanifold we can choose two non-light-like vector fields $X\in D$ and $Z\in D^\perp$; for them $$ (\w R(X,JX)Z)^\perp=\fra{c}{2}g(X,X)FZ.$$
But for a totally umbilical submanifold, Codazzi's equation (\ref{codazzi}) gives
$$(\w R(X,Y)Z)^\perp=\nabla_X^\perp g(Y,Z)L-g(\nabla_XY,Z)L-g(Y,\nabla_X Z)L-\nabla_Y^\perp g(X,Z)L+g(\nabla_YX,Z)L+g(X,\nabla_YZ)L=0.$$ Comparing both equations, if $c\neq0$, it follows $FZ=0$ which is a contradiction.
\end{proof}

\

Moreover the same proof is valid for asserting:

\begin{cor}
There not exits proper semi slant totally umbilical submanifolds of a para complex space form $\w M(c)$ with $c\neq 0$.
\end{cor}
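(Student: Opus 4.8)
The plan is to recycle, almost verbatim, the argument used in the preceding theorem (the non-existence of proper CR totally umbilical submanifolds of a para complex space form with $c\neq 0$), observing that the only place the CR hypothesis entered was through the existence of a non-light-like vector field $Z$ in the anti-invariant distribution with $FZ\neq 0$, and that this feature persists for a proper semi-slant submanifold. So first I would recall from (\ref{mejicANA}) the normal component of the ambient curvature,
\begin{equation*}
(\w R(X,Y)Z)^\perp=\fra{c}{4}\{g(X,JZ)FY-g(Y,JZ)FX+2g(X,JY)FZ\},
\end{equation*}
valid for all tangent $X,Y,Z$; this step uses nothing about the submanifold.

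Next I would choose the test vectors. Since $M$ is a \emph{proper} semi-slant submanifold, $TM=D_T\oplus D_2$ with $D_2$ a proper slant distribution, so $F\rceil_{D_2}\neq 0$; pick a non-light-like $Z\in D_2$ with $FZ\neq 0$ (by the characterization of the maximal holomorphic distribution as $\{X/FX=0\}$, such a $Z$ exists; in type~1 or~2 cases $D_2$ is neutral so non-light-like vectors of either causal type are available, and in type~3 space-like or time-like ones are). Then pick a non-light-like $X\in D_T$, for which $FX=0$ and $PX=JX$. Substituting $Y=JX\in D_T$ (still in $D_T$ since $D_T$ is holomorphic, hence $FJX=0$) and using $g(X,JZ)=g(X,PZ)+g(X,FZ)=g(X,PZ)=0$ because $D_T\perp D_2$ and $PZ\in D_2$ by Proposition~\ref{lugones}-type invariance, I get
\begin{equation*}
(\w R(X,JX)Z)^\perp=\fra{c}{2}g(X,X)FZ,
\end{equation*}
exactly as in the CR case.

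Then I would invoke Codazzi's equation (\ref{codazzi}) together with total umbilicity $h(X,Y)=g(X,Y)L$: as in the previous theorem's proof, every term in $(\w R(X,JX)Z)^\perp=(\w\nabla_X h)(JX,Z)-(\w\nabla_{JX}h)(X,Z)$ is a multiple of $L$ with coefficients built from derivatives of $g(JX,Z)$, $g(X,Z)$ and the connection terms, and these cancel in pairs, giving $(\w R(X,JX)Z)^\perp=0$. Comparing with the formula above, and since $X$ is non-light-like and $c\neq 0$, we force $FZ=0$, contradicting the choice of $Z$. Hence no such submanifold exists.

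The only genuinely non-routine point — and the one I would state carefully rather than wave at — is the existence of a suitable non-light-like $Z\in D_2$ with $FZ\neq0$ of a prescribed causal type, so that $g(X,X)\neq 0$ can be arranged simultaneously; this is handled by the remark that type~1 and~2 slant distributions are neutral (so vectors of both causal characters are present) and that $F$ does not vanish on a proper slant distribution. Once that observation is in place the computation is word-for-word the CR one, which is why "the same proof is valid."
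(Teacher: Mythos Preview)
Your proposal is correct and is exactly the approach the paper has in mind: the paper literally says ``the same proof is valid'' and does not spell out any further details, so you are reproducing what the authors intend. One small simplification: you do not actually need the argument that $g(X,JZ)=0$, since $Y=JX\in D_T$ already gives $FY=0$, killing that term in $(\w R(X,JX)Z)^\perp$ outright; also, your justification that $FZ\neq 0$ on the proper slant distribution follows more directly from $tFZ=(1-\lambda)Z\neq 0$ for $\lambda\neq 1$, which guarantees $FZ\neq 0$ for \emph{every} nonzero $Z\in D_2$, not just some.
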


\end{document}